\documentclass[11pt]{article}

\usepackage[T1]{fontenc}
\usepackage[utf8]{inputenc}
\usepackage{amsmath,amssymb,amsthm}
\usepackage{enumitem}
\usepackage{geometry}
\usepackage{graphicx}
\usepackage{caption}
\usepackage{hyperref}
\usepackage{mathtools}
\usepackage{mathrsfs}

\numberwithin{equation}{section}

\theoremstyle{plain}
\newtheorem{theorem}{Theorem}[section]
\newtheorem{lemma}[theorem]{Lemma}

\newtheorem{corollary}[theorem]{Corollary}

\theoremstyle{definition}
\newtheorem{definition}[theorem]{Definition}

\newtheorem{remark}[theorem]{Remark}
\title{Nonlinear Transmission Eigenvalue Problems with Nonhomogeneous Operators of Different $p$-Growth}

\author{
Lumini\c{t}a Barbu\thanks{Faculty of Mathematics and Computer Science,
Ovidius University of Constan\c{t}a, 124 Mamaia Blvd., 900527 Constan\c{t}a, Romania.
E-mail: \texttt{lbarbu@univ-ovidius.ro}; \texttt{raluca.turtoi@365.univ-ovidius.ro}}
\and
Raluca-Gabriela Turtoi\footnotemark[1]
}

\date{\today}

\begin{document}

\maketitle

\begin{abstract}
Let $\Omega \subset \mathbb{R}^N$, $N \ge 2$, be a bounded domain with Lipschitz boundary,
divided by a Lipschitz hypersurface $\Sigma$ into two open, disjoint Lipschitz subdomains
$\Omega_1$ and $\Omega_2$.
We study a nonlinear transmission eigenvalue problem driven by nonhomogeneous operators with $p_i$- growth in each subdomain $\Omega_i$, $i=1,2$, and subject to continuity and flux transmission conditions across the interface $\Sigma$.
The real parameter $\lambda$ appears both in the equations and in the nonlinear boundary conditions.
Using variational methods, we prove the existence of an unbounded sequence of eigenvalues.
Under additional assumptions, we establish that the set of eigenvalues coincides with the entire interval $(0,\infty)$.
As a particular case, we obtain the corresponding eigenvalue results for the associated single-domain problem.
\end{abstract}

\medskip
\noindent\textbf{Keywords:}
transmission eigenvalue problem, nonhomogeneous operators, variational methods,
Krasnosel'skii genus, Mountain Pass Theorem.

\medskip
\noindent\textbf{MSC2020:}
35J60, 47J30, 58E05.

\section{Introduction}
\label{intro}

Let $\Omega \subset \mathbb{R}^N$, $N \ge 2$, be a bounded domain with Lipschitz boundary.
Assume that $\Omega$ is divided by a Lipschitz hypersurface $\Sigma$
into two open, disjoint Lipschitz subdomains $\Omega_1$ and $\Omega_2$,
such that $\overline{\Omega}=\overline{\Omega}_1\cup\overline{\Omega}_2$ and $\overline{\Sigma}=\overline{\Omega}_1\cap\overline{\Omega}_2$. We denote $\Gamma_i=\partial\Omega_i\setminus\overline{\Sigma}$, for $i=1,2$.

We consider two geometric configurations. In the first case, $\Sigma$ is an interior hypersurface, that is,
$\overline{\Sigma}\cap\partial\Omega=\emptyset$. Moreover, $
\Gamma_1\cap\Gamma_2=\emptyset.
$
In the second case, $\overline{\Sigma}\cap\partial\Omega\neq\emptyset$, and, in addition, $\overline{\Gamma}_1\cap\overline{\Gamma}_2=\overline{\Sigma}\cap\partial\Omega$.
We also require that $\mathcal H^{N-1}(\Sigma)>0$ and
$\mathcal H^{N-1}(\Gamma_i)>0$ for $i=1,2$, where $\mathcal H^{N-1}(\cdot)$ denotes the
$(N-1)$- dimensional Hausdorff measure.

We consider the nonlinear transmission eigenvalue problem in $\Omega$ 
\begin{equation}\label{eq:1.0} 
\begin{cases} 
-\operatorname{div}\mathcal{A}(x,\nabla u)+\rho(x)|u|^{\zeta-2}u = \lambda\,\alpha(x)|u|^{q-2}u & \text{in }\Omega, \\ \mathcal{A}(x,\nabla u)\cdot\nu +\gamma(x)|u|^{\eta-2}u = \lambda\,\beta(x)|u|^{q-2}u & \text{on }\partial\Omega , 
\end{cases} 
\end{equation} 
subject to the transmission conditions 
\begin{equation}\label{eq:1.00} 
u_1=u_2, \qquad A_1(x,\nabla u_1)\cdot\nu = A_2(x,\nabla u_2)\cdot\nu \quad \text{on }\Sigma, 
\end{equation} 
where $u_i:=u|_{\Omega_i}$, $A_i$ denotes the operator acting in $\Omega_i$, $i=1,2$, and $\nu$ denotes the unit outward normal on $\partial\Omega$, while on $\Sigma$ it denotes a fixed unit normal oriented from $\Omega_1$ to $\Omega_2$. 

\noindent 
Here, the operator $\mathcal{A}$ is assumed to have $p(x)$–growth, where $p(x):=p_1 \chi_{\Omega_1}(x)+p_2\chi_{\Omega_2}(x)$, that is, for $(x,\xi)\in\Omega\times\mathbb{R}^N$ and $i=1, 2,$ \begin{equation}
\label{eq:Ai} 
\mathcal{A}(x,\xi):= A_1(x,\xi)\chi_{\Omega_1}(x)+A_2(x,\xi)\chi_{\Omega_2}(x),\quad A_i(x,\xi):=G_{iy} \big(x,F(\xi) \big)\nabla_\xi F(\xi), 
\end{equation} 
where $G_i:\Omega_i\times[0,\infty)\to[0,\infty)$ is a Carath\'{e}odory function satisfying a $p_i$-growth condition, while $F:\mathbb{R}^N\to[0,\infty)$ is a given norm. Throughout the paper, $G_{iy}(x,y)$ denotes the partial derivative of $G_i(x,\cdot)$ with respect to $y$, and $\nabla_\xi F(\xi)$ denotes the gradient of $F$ with respect to $\xi$.

Also, all the coefficients may change across the interface $\Sigma$.
More precisely,
\[
\alpha(x):=\alpha_1(x)\chi_{\Omega_1}(x)+\alpha_2(x)\chi_{\Omega_2}(x),
\qquad
\beta(x):=\beta_1(x)\chi_{\Gamma_1}(x)+\beta_2(x)\chi_{\Gamma_2}(x),
\]
and similarly for $\rho$ and $\gamma$.
Moreover, the exponents $q, \zeta, \eta$ are piecewise constant, namely,
\[
\begin{aligned}
q(x) &=
\begin{cases}
q_1, & x\in \Omega_1\cup\Gamma_1,\\
q_2, & x\in \Omega_2\cup\Gamma_2,
\end{cases}
\qquad
\zeta(x) =
\begin{cases}
\zeta_1, & x\in \Omega_1,\\
\zeta_2, & x\in \Omega_2,
\end{cases}
\qquad
\eta(x) =
\begin{cases}
\eta_1, & x\in \Gamma_1,\\
\eta_2, & x\in \Gamma_2.
\end{cases}
\end{aligned}
\]

Additional assumptions on $F$, $G_i$, the exponents and the weight functions are specified in
Subsection~\ref{ss:AFF}.
 
The general framework adopted here allows us to treat several models
arising in the study of nonlinear diffusion in heterogeneous media
in a unified way. We briefly mention some representative special cases.

(i) If $\Omega_2=\emptyset$, the problem reduces to the corresponding
single-domain eigenvalue problem driven by a $p$-growth operator
(see Corollary~\ref{cor:singledomain}). Related eigenvalue results in the
single-domain setting involving $G=(p,r)$-Laplacian operators can be
found in \cite{BBMmjm,BBMauoc}. For $G$ satisfying the assumptions in
Section~\ref{ss:AFF}, see, e.g., \cite{PR,PRS,PW,CPV}.

(ii) If $p_1\neq p_2$ and $G_i$ corresponds to the $p_i$-Laplacian, $i=1,2$,
the model describes diffusion processes with different nonlinear behavior
in the two subdomains. Transmission problems of this type have been
considered, for instance, in \cite{MolinoRossi}, where the Laplacian acts
in one subdomain and the $p$--Laplacian in the complementary region under
Dirichlet boundary conditions. Eigenvalue transmission problems with
Neumann boundary conditions driven by $(p_1,p_2)$-Laplacians were studied
in \cite{BMP}.

(iii) The variational framework developed in this paper also allows the case
$\Gamma_2=\emptyset$, which corresponds to a configuration where
$\Omega_2$ lies entirely inside $\Omega$ and its boundary coincides with the
interface $\Sigma$. In this case all boundary conditions on $\Gamma_2$
have to be dropped, while the transmission conditions on $\Sigma$ remain
unchanged (see \cite{BBM} for the case of $(p_1,p_2)$-Laplacians with a
Neumann–Robin boundary condition).

Transmission conditions of the form
\[
A_1(x,\nabla u_1)\cdot\nu_{1\Sigma}
=
A_2(x,\nabla u_2)\cdot\nu_{2\Sigma}
=
\mu\,(u_1-u_2)
\]
arise naturally in models describing the movement of individuals between
adjacent habitats separated by an interface, where the parameter $\mu>0$
measures the permeability of the interface (see \cite{AB}). In the limit
$\mu\to\infty$, perfect mixing across $\Sigma$ formally leads to the
classical transmission condition $u_1=u_2$ on $\Sigma$ considered here. In particular, the operators $A_i$ may represent
the Laplacian, corresponding to linear diffusion, or more general nonlinear
operators such as the $p_i$-Laplacian.

Other applications occur in models of electromagnetic wave propagation in
optical fibers, where transmission conditions appear at the interface
between the core and the cladding of the fiber (see \cite{Pf}). Related
transmission problems in domains partitioned into several subdomains were
studied in \cite{Ni}.

Nonlinear $p$-growth operators, also called $p$-Laplacian-like operators,
play an important role in the study of nonlinear elliptic and parabolic
equations. Such operators arise in several models from mathematical
physics, including capillarity phenomena and nonlinear elasticity
(see, e.g., \cite{FNELCCM, FNEL, CVetro, Vetro}).

When the exponent depends on the spatial variable one obtains the
$p(x)$-Laplacian, which arises in models of nonhomogeneous materials and
organic semiconductors (see \cite{Bul}). A particular case occurs when the
exponent $p(x)$ is piecewise constant, leading to diffusion operators that
change across different regions of the domain.

Regularity, existence, and multiplicity results for nonlinear elliptic
equations involving $p$-growth operators have been obtained under
various boundary conditions. For Dirichlet problems see \cite{PR, PRS}, while Robin problems were studied in \cite{PW, CPV}.

Variational existence results for elliptic transmission problems driven by
the Laplacian were obtained in \cite{FM12,FM}.
Eigenvalue transmission problems of Kirchhoff type were investigated in
\cite{Li, MM}.

In contrast with the single-domain case, the present setting requires a
modified variational approach due to the interaction between the
subdomains and the nonhomogeneous structure of the operators.

The main contributions of the paper are the following:
$(i)$~ the introduction of an appropriate variational framework for
problem~\eqref{eq:1.0}--\eqref{eq:1.00}; $(ii)$ the existence of an unbounded sequence of eigenvalues obtained
via genus theory and a Lusternik--Schnirelmann type argument;
$(iii)$ under suitable assumptions, the characterization of the
eigenvalue set as the entire interval $(0,\infty)$ using coercivity
arguments and the Mountain Pass Theorem;
$(iv)$ the corresponding eigenvalue results for the associated
single-domain problem.

The paper is organized as follows.
Section~\ref{sec:ar} introduces the functional framework and the assumptions,
fixes the notation, and contains several auxiliary results together with
the statements of the main theorems.
Sections~\ref{se:teorema1} and~\ref{sec:teorema2}
are devoted to the proofs of the main results.

\section{Preliminaries and main results}\label{sec:ar}

\subsection{Functional framework, assumptions, and main results}\label{ss:AFF}
We start by introducing the main notation and assumptions used throughout this paper.

Let $F:\mathbb{R}^N\to[0,\infty)$ be a fixed norm, strictly convex and of class
$C^1(\mathbb{R}^N\setminus\{0\})$.
Using a norm $F$ instead of the Euclidean norm $|\cdot|$ 
allows us to naturally include anisotropic operators.

We collect below the assumptions needed for the variational framework.

Let $p_1, p_2\in(1,\,\infty)$ be fixed. For $i\in\{1,2\}$, let $G_i:\Omega_i\times[0,\infty)\to[0,\infty)$ satisfy the following assumptions.
\begin{itemize}
\item[($H_{G1}$)]
For every $y\ge0$, the mapping $x\mapsto G_i(x,y)$ is measurable on $\Omega_i$
and for a.e. $x\in\Omega_i$, the function $y\mapsto G_i(x,y)$ is $C^1$,
strictly convex on $[0,\infty)$, and satisfies $G_i(x,0)=G_{iy}(x,0)=0$.
\item[($H_{G2}$)]
There exist  $d_i>0$ and $y_{0i}\ge0$ such that, for a.e.\ $x\in\Omega_i$,
\[
G_i(x,y)\ge d_i y^{p_i}
\quad \text{for all }y\ge y_{0i}.
\]
\item[($H_{G3}$)]
There exists a function $a_i\in L^\infty(\Omega_i)$ with $a_i\ge 0$  a.e.  on $\Omega_i$ such that
\[
G_{iy}(x,y)\le a_i(x)\big(1+y^{p_i-1}\big)
\quad \text{for a.e. }x\in\Omega_i\quad \text{and all } y\ge 0.
\]
\end{itemize}

\begin{remark}\label{re:exempleGF}
Let $D\in\{\Omega_1,\Omega_2\}$ be fixed.
Assume that $\kappa\in L^\infty(D)$ satisfies $\kappa(x)\ge \kappa_0>0$ a.e. on $D$.
Let $p>1$ be fixed, and let $F(\xi)=|\xi|$ be the Euclidean norm.
Typical examples of admissible functions $G:D\times[0,\infty)\to\mathbb{R}$ satisfying $(H_{G1})$-$(H_{G3})$  include
\begin{equation}\label{eq:g1}
G(x,y)=\frac{\kappa(x)}{p}\,y^{p}, 
\quad
G(x,y)=\frac{\kappa(x)}{p}y^{p}+\frac{\mu(x)}{r}y^{r},\quad 1<r<p,
\end{equation}
where $\mu\in L^\infty(D)$, $\mu\ge0$ a.e. in $D$; the corresponding operators are the weighted $p$-Laplacian and the weighted $(p,r)$-Laplacian.
Further examples with $p$-growth are
\begin{equation}\label{eq:g2}
G(y)=\frac{(k^2+y^2)^{\frac p2}-k^p}{p}, 
\quad
G(y)=\frac{\big(\sqrt{k^2+y^{2p}}-k\big)}{p},\quad k>0,
\end{equation}
which arise in regularized $p$-Laplacian and capillarity-type models (see \cite{Vetro}).
Examples from nonlinear elasticity include
\begin{equation}\label{eq:g3}
G(y)=\frac{(\sqrt{k^2+y^2}-k)^p}{p},\quad k>0
\end{equation}
(see \cite{FNELCCM,FNEL}), while logarithmic perturbations such as
\begin{equation}\label{eq:g4}
G(y)=\frac{1}{p}y^{p}+\frac{1}{r}\ln(1+y^{r}), 
\quad p\ge2,\quad 1<r\le p,
\end{equation}
are also admissible.

The class of admissible functions $F$ includes $\ell_r$-norms $F(\xi)=\|\xi\|_{r}$, $r>1$, which, for $G(y)=y^{p}/p$, 
lead to the classical $p$-Laplacian when $r=2$ and to the pseudo $p$-Laplacian when $r=p$ (see \cite{BF}),
as well as anisotropic norms of the form $F(\xi)=\sqrt{\langle M\xi,\xi\rangle}$,
where $M$ is a constant $N\times N$ symmetric positive definite matrix.
\end{remark}

\begin{remark}\label{re:esob}
For completeness, we recall the Sobolev and trace critical exponents. Let $D\subset\mathbb{R}^N$ be a bounded domain with smooth boundary and
$1<\theta<N$. Set
\[
\theta^*:=\frac{\theta N}{N-\theta},\quad
\theta_*:=\frac{\theta(N-1)}{N-\theta},
\]
which are the critical Sobolev and trace exponents, respectively. We set $\theta^*=\theta_*:=\infty$ whenever $\theta\ge N$.
The Sobolev embedding $W^{1,\theta}(D)\hookrightarrow L^{r}(D)$ is compact
for
(i) $1\le r<\theta^*$ if $1<\theta<N$;
(ii) $1\le r<\infty$ if $\theta=N$;
(iii) when viewed as an embedding into $C(\overline D)$ if $\theta>N$.
Similarly, the trace operator defines a compact embedding
$W^{1,\theta}(D)\hookrightarrow L^r(\partial D)$ for $1\le r<\theta_*$ when $1\le \theta< N$.
If $\theta\ge N$, the trace embedding is compact into $L^r(\partial D)$
for every $1\le r<\infty$.
See, e.g., \cite[Theorem~3.9.52]{DMP}, \cite[Section~6.10, 6.10.5]{KJF}.
\end{remark}

Before stating the main results, we introduce some additional notation
and assumptions.

\noindent
Let $\alpha_i \in L^{\infty}(\Omega_i)$ and $\beta_i \in L^{\infty}(\Gamma_i)$,
with $\alpha_i \ge 0$ a.e.  on $\Omega_i$ and $\beta_i \ge 0$ a.e.  on $\Gamma_i$.
Define
\begin{equation}\label{eq:defiq}
\begin{aligned}
\mathcal I_{\alpha\beta}
&:=\Big\{ m\in\{1,2\};~
\int_{\Omega_m}\alpha_m\,dx
+\int_{\Gamma_m}\beta_m\,d\sigma>0
\Big\},\\
q_{\min}&:=\min\{q_m;\ m\in\mathcal I_{\alpha\beta}\},
\qquad
q_{\max}:=\max\{q_m;\ m\in\mathcal I_{\alpha\beta}\},\\
p_{\min}&:=\min\{p_1,p_2\},
\qquad
p_{\max}:=\max\{p_1,p_2\}.
\end{aligned}
\end{equation}
\begin{itemize}
\item[$(H_{pq})$] For every $i\in\mathcal I_{\alpha\beta}$, we assume
$1<q_i<p_{i*}$ if $\beta_i\not\equiv 0$ on $\Gamma_i$, and
$1<q_i\le p_i^*$ otherwise.

\item[$(H_{\zeta\eta})$]  $1<\zeta_i <  p_{i}^*$ if $\rho_i\not\equiv 0$ in $\Omega_i$, and  $1<\eta_i < p_{i*}$ if  $\gamma_i\not\equiv 0$ on $\Gamma_i$.

\item[$(H_{\alpha\beta})$] $\alpha_i \in L^{\infty}(\Omega_i)$, $\beta_i \in L^{\infty}(\Gamma_i)$,
$\alpha_i \ge 0$ a.e.  on $\Omega_i$, $\beta_i \ge 0$ a.e.  on $\Gamma_i$.
Moreover, $\mathcal I_{\alpha\beta} \neq \emptyset$.

\item[$(H_{\rho\gamma})$]
$\rho_i \in L^{\infty}(\Omega_i)$,  $\gamma_i \in L^{\infty}(\Gamma_i)$.
Moreover, if $\zeta_i\ge p_{\min}$, then $\rho_i\ge 0$ a.e. on $\Omega_i$,
and if $\eta_i\ge p_{\min}$, then $\gamma_i\ge 0$ a.e. on $\Gamma_i$.
  \end{itemize}
For $\theta\in(1,\infty]$, we denote by $\|\cdot\|_{i\theta}$ the Lebesgue norm in
$L^\theta(\Omega_i)$, and by $\|\cdot\|_{\partial i\theta}$ the corresponding norm
in $L^\theta(\Gamma_i)$. 

A weak solution of problem \eqref{eq:1.0}--\eqref{eq:1.00} is a pair 
\(U=(u_1,u_2)\in W^{1,p_1}(\Omega_1)\times W^{1,p_2}(\Omega_2)\)
such that, for each \(i=1,2\), the function \(u_i\) satisfies the
corresponding equation in \(\Omega_i\) in the sense of distributions,
together with the associated boundary and transmission conditions
in the sense of traces.
Obviously, any solution $U=(u_1,u_2)$ of problem \eqref{eq:1.0}--\eqref{eq:1.00} can be identified with an element $u$ of the space
\begin{equation*}
W:=\left\{u\in W^{1,p_1}(\Omega);~ u\vert_{\Omega_2}\in W^{1,p_2}(\Omega_2)\right\},
\end{equation*}
where $u\vert_{\Omega_i}=u_i.$
We endow  $W$ with the norm
\begin{equation}\label{eq:1.nw}
\| u\|:=\| u_1 \|_1+\parallel u_2\|_2\quad\text{for all } u=(u_1, u_2)\in W,
\end{equation}
where $\| \cdot\|_i$ is defined by
$
\|u_i\|_i:= \| \nabla u_i \|_{ip_i}+\|u_i\|_{{ip_i}}.
$

The space $W$ defined above can be identified with the space
\begin{equation}\label{eq:1.2}
\mathcal{W}:= \big\{ (u_1,u_2)\in W^{1,p_1}(\Omega_1)\times W^{1,p_2}(\Omega_2);\,
u_1=u_2\, \text{on }\Sigma\,\text{in the sense of trace}\big\}
\end{equation}
 (see \cite[Remark~1.1]{BMP}). Hence, $\mathcal W$ is a reflexive Banach space, since it is a closed subspace of the
reflexive product $W^{1,p_1}(\Omega_1)\times W^{1,p_2}(\Omega_2)$. For simplicity of notation, we use the same symbol $\|\cdot\|$ for the norm on $W$ and on $\mathcal W$
\begin{definition}\label{de:sol}
We say that a real number $\lambda$ is an eigenvalue of problem~\eqref{eq:1.0}--\eqref{eq:1.00} 
if \eqref{eq:1.0}--\eqref{eq:1.00} admits a weak solution 
$\widetilde{u}_\lambda = (u_{1\lambda},u_{2\lambda}) \in \mathcal{W} \setminus \{(0,0)\}$. 
Such a function $\widetilde{u}_\lambda$ is called an eigenfunction corresponding
to $\lambda$, and the pair $(\lambda,\widetilde u_\lambda)$ is referred to as an
eigenpair of problem~\eqref{eq:1.0}--\eqref{eq:1.00}.
\end{definition}

For any $r>0$, we define the symmetric subset of $\mathcal{W}$
\begin{equation}\label{eq:2.defM}
\mathcal{M}_{r}=\Big\{\widetilde{u}=(u_1, u_2)\in \mathcal{W};~
\sum_{i=1}^2\frac{1}{q_i}\Big(\int_{\Omega_i}\alpha_i|u_i|^{q_i}\,dx
+ \int_{\Gamma_i}\beta_i|u_i|^{q_i}\,d\sigma\Big) = r \Big\}.
\end{equation}

We first establish the existence of an unbounded sequence of eigenvalues
via a Lusternik--Schnirelmann minimax principle, without any restriction on the
relative position of the exponents $q_i$ and $p_i$.

\begin{theorem}\label{teorema1}
Assume that  $(H_{G1})$, $(H_{G2})$, $(H_{G3})$, $(H_{pq})$, $(H_{\zeta\eta})$,
$(H_{\alpha\beta})$, and $(H_{\rho\gamma})$ are fulfilled.
Then, for every $r>0$, there exists a sequence of eigenpairs
$\big((\lambda_n,\pm(u_{1n},u_{2n}))\big)_n$ of problem~\eqref{eq:1.0}--\eqref{eq:1.00},
with $(u_{1n},u_{2n})\in\mathcal{M}_r$,
such that $\lambda_n\to\infty.$ 
\end{theorem}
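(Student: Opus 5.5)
We plan to apply the Lusternik--Schnirelmann minimax principle on the constraint manifold $\mathcal M_r$, in the form for even $C^1$ functionals restricted to level sets of another even $C^1$ functional (as in Zeidler's or Szulkin's versions). Define on $\mathcal W$
\[
\Phi(\widetilde u):=\sum_{i=1}^2\Big(\int_{\Omega_i}G_i\big(x,F(\nabla u_i)\big)\,dx+\frac1{\zeta_i}\int_{\Omega_i}\rho_i|u_i|^{\zeta_i}\,dx+\frac1{\eta_i}\int_{\Gamma_i}\gamma_i|u_i|^{\eta_i}\,d\sigma\Big),
\]
and let $\Psi(\widetilde u)$ be the functional defining $\mathcal M_r$, so that $\mathcal M_r=\Psi^{-1}(r)$. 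By $(H_{G1})$, $(H_{G3})$, the $C^1$ regularity and $1$-homogeneity of $F$, and the growth hypotheses $(H_{pq})$, $(H_{\zeta\eta})$ together with the embeddings of Remark~\ref{re:esob}, both $\Phi$ and $\Psi$ are even and $C^1$. Moreover, $\Psi'$ is completely continuous whenever the exponents are subcritical. In the critical interior case $q_i=p_i^*$ with $\beta_i\equiv0$ this fails, and we will have to work with weak continuity of $\Psi$ only where the compactness is genuinely needed. Critical points of $\Phi|_{\mathcal M_r}$ satisfy $\Phi'(\widetilde u)=\lambda\Psi'(\widetilde u)$. Testing with arbitrary elements of $\mathcal W$ gives exactly the weak formulation of \eqref{eq:1.0}--\eqref{eq:1.00}, since the transmission condition is encoded in $\mathcal W$. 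Hence each such point yields an eigenpair $(\lambda,\pm\widetilde u)$ with $\widetilde u\neq0$ because $r>0$.

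The key steps, in order, are as follows.

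\textbf{(a)} $\mathcal M_r$ is a nonempty $C^1$ manifold. Nonemptiness uses $\mathcal I_{\alpha\beta}\neq\emptyset$ together with a scaling argument on a function supported where $\alpha_m$ or $\beta_m$ is positive. The manifold structure follows because $\langle\Psi'(\widetilde u),\widetilde u\rangle=\sum_i\big(\int_{\Omega_i}\alpha_i|u_i|^{q_i}\,dx+\int_{\Gamma_i}\beta_i|u_i|^{q_i}\,d\sigma\big)\ge q_{\min}r>0$ on $\mathcal M_r$.

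\textbf{(b)} $\Phi$ is coercive and bounded below on $\mathcal M_r$. By $(H_{G2})$, $G_i(x,F(\xi))\ge c|\xi|^{p_i}-C$. The lower-order terms with possibly negative coefficients have exponents below $p_{\min}$ by $(H_{\rho\gamma})$, so they are absorbed. Control of $\|u_i\|_{ip_i}$ comes from the constraint combined with a Poincaré/Friedrichs-type inequality on $\mathcal W$. Here we argue by contradiction via normalization: if $\nabla u_i\to0$ and $u$ were large in $L^{p_i}$, then $u$ would be close to a constant (equal on both pieces by continuity across $\Sigma$), while the constraint at level $r$ forces that constant to be bounded because $\int_{\Omega_m}\alpha_m+\int_{\Gamma_m}\beta_m>0$ for some $m$.

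\textbf{(c)} The Palais--Smale condition holds on $\mathcal M_r$. A PS sequence is bounded by (b), so it converges weakly along a subsequence, and the Lagrange multipliers $\lambda_n$ stay bounded using (a). Strong convergence then follows from the $(S_+)$ property of $\widetilde u\mapsto\big(-\operatorname{div}A_1,-\operatorname{div}A_2\big)$. This property comes from the strict convexity of $G_i(x,\cdot)$, the strict convexity of $F$, and the $p_i$-growth, via a standard monotonicity argument. The lower-order terms are handled by compact embeddings.

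\textbf{(d)} For each $n$, $\mathcal M_r$ contains compact symmetric sets of genus at least $n$. We obtain these by taking an $n$-dimensional subspace of functions in $C_c^\infty$ of a set where $\alpha_m>0$ (or $\beta_m>0$ on $\Gamma_m$). On such a subspace $\Psi$ is positive away from zero and even, so the radial projection onto $\mathcal M_r$ of its unit sphere is an odd homeomorphic image of $S^{n-1}$. Consequently the minimax levels $c_n=\inf_{K\in\Sigma_n}\sup_K\Phi$ are finite critical values.

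\textbf{(e)} $c_n\to\infty$. Since $\mathcal M_r$ is infinite-dimensional and closed, and $\Phi$ is coercive and satisfies (PS), the standard result gives $c_n\to\infty$ with $\lambda_n\to\infty$. For this we relate $\lambda_n$ to $c_n$ through $\lambda_n\langle\Psi'(u_n),u_n\rangle=\langle\Phi'(u_n),u_n\rangle$, bounding $\langle\Psi'(u_n),u_n\rangle\le q_{\max}r$ above and $\langle\Phi'(u_n),u_n\rangle\gtrsim\Phi(u_n)-C$ below via convexity of $G_i$ together with $G_i(x,0)=0$, which gives $G_{iy}(x,y)y\ge G_i(x,y)$.

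The step we expect to be the main obstacle is (c), together with the unboundedness in (e). The nonhomogeneity of $G_i$ and the mixed exponents prevent any scaling argument. Moreover, in the critical case $q_i=p_i^*$ the compactness of $\Psi'$ fails, so we would first try to restrict the critical exponent to $i\notin\mathcal I_{\alpha\beta}$-relevant terms or to use the $(S_+)$ argument only on the gradient part. If that fails, we would fall back on proving $c_n\to\infty$ by Ljusternik--Schnirelmann applied to the genus of sublevel sets intersected with complements of finite-codimensional subspaces.
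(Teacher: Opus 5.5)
Your plan follows the paper's proof step for step. Both use Szulkin's Lusternik--Schnirelmann principle on $\mathcal M_r$, coercivity through the transmission-based norm equivalence (Lemma~\ref{le:normatransmission}), (PS) from bounded multipliers plus the $(S_+)$ property (Lemma~\ref{le:in}), genus via disjointly supported bumps (Lemma~\ref{le:ig}), and $\lambda_n\to\infty$ from $\langle\mathcal H'_{q_1q_2}(\widetilde u_n),\widetilde u_n\rangle\le q_{\max}r$ together with $yG_{iy}\ge G_i$. The one minor difference is that you get $c_n\to\infty$ from the standard deformation consequence of (PS), whereas the paper uses finiteness of the genus of sublevel sets (Remark~\ref{re:finitgenus}).

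Your concern about $q_i=p_i^*$ is not a deficit relative to the paper. Its (PS) argument relies on the compact embeddings of Remark~\ref{re:esobW}, which hold only for $\theta_i<p_i^*$, so the critical case is not covered by either argument, and (PS) on $\mathcal M_r$ can in fact fail there by concentration.
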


To complement Theorem~\ref{teorema1}, we introduce the following additional assumptions.

\begin{itemize}

\item[($h_{G4}$)] 
There exist $\delta_i\in[0,p_i)$ and  $c_i>0$ such that for a.e. $x\in\Omega_i,$
\[
y\,G_{iy}(x,y)-p_i\,G_i(x,y)\le c_i y^{\delta_i}
\quad \text{for all }y\ge 0.
\]
\item[$(h_{\rho\gamma})$]
$\rho_i \in L^{\infty}(\Omega_i)$, $\gamma_i \in L^{\infty}(\Gamma_i)$, 
 $\rho_i \geq 0$ a.e. in $\Omega_i$, $\gamma_i \geq 0$ a.e. on $\Gamma_i$. 

Moreover, there exists $l\in\{1, 2\}$ such that
\[
\int_{\Omega_l} \rho_l\, dx +\int_{\Gamma_l} \gamma_l\, d\sigma >0.
\]
\end{itemize}

We now state the second main result of the paper, concerning two representative regimes determined by the growth exponents.
Other cases are left for future investigation.
\begin{theorem}\label{teorema2}
Let $\zeta_i=\eta_i=p_i$.
Assume that $(H_{G1})$, $(H_{G3})$, $(H_{pq})$,
$(H_{\alpha\beta})$, and $(h_{\rho\gamma})$ are fulfilled.
If either
\begin{itemize}
\item[(a)] the assumptions $(H_{G2})$ with $y_{0i}=0$ and $(h_{G4})$ hold and $p_{\max}<q_{\min}$, or
\item[(b)] $q_{\max}<p_{\min}$,
\end{itemize}
then the set of eigenvalues of problem~\eqref{eq:1.0}--\eqref{eq:1.00}
is $(0,\infty)$.
\end{theorem}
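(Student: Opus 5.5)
We work with the energy functional $J_\lambda=\Phi-\lambda\Psi$ on $\mathcal W$, where
\[
\Phi(u)=\sum_{i=1}^2\Big(\int_{\Omega_i}G_i(x,F(\nabla u_i))\,dx+\frac1{p_i}\int_{\Omega_i}\rho_i|u_i|^{p_i}\,dx+\frac1{p_i}\int_{\Gamma_i}\gamma_i|u_i|^{p_i}\,d\sigma\Big),
\]
\[
\Psi(u)=\sum_{i=1}^2\frac1{q_i}\Big(\int_{\Omega_i}\alpha_i|u_i|^{q_i}\,dx+\int_{\Gamma_i}\beta_i|u_i|^{q_i}\,d\sigma\Big).
\]
Nontrivial critical points of $J_\lambda$ are exactly the eigenfunctions.

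The first step is to show that $\lambda\le0$ is never an eigenvalue. Testing the weak formulation with $u$ itself gives
\[
\sum_i\int_{\Omega_i}G_{iy}(x,F(\nabla u_i))F(\nabla u_i)\,dx+\sum_i\Big(\int_{\Omega_i}\rho_i|u_i|^{p_i}+\int_{\Gamma_i}\gamma_i|u_i|^{p_i}\Big)=\lambda\sum_i\Big(\int\alpha_i|u_i|^{q_i}+\int\beta_i|u_i|^{q_i}\Big).
\]
Here we use Euler's identity $\nabla F(\xi)\cdot\xi=F(\xi)$. By strict convexity and $G_{iy}(x,0)=0$ we have $yG_{iy}\ge0$, with equality only at $y=0$. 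So if $\lambda\le0$, then $\nabla u_i=0$, hence $u$ is constant on $\Omega$ (by continuity across $\Sigma$). The term $\int\rho_l+\int\gamma_l>0$ from $(h_{\rho\gamma})$ then forces $u=0$.

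The key auxiliary fact is a Poincaré-type inequality: there is $c>0$ such that
\[
\sum_i\Big(\|\nabla u_i\|_{ip_i}^{p_i}+\int\rho_i|u_i|^{p_i}+\int\gamma_i|u_i|^{p_i}\Big)
\]
controls $\|u\|$ in a modular sense. This is proved by the usual contradiction/compactness argument: a normalized sequence with vanishing left-hand side converges to a constant $c_0$ with $c_0^{p_l}(\int\rho_l+\int\gamma_l)=0$, so $c_0=0$, contradicting the normalization.

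\textbf{Case (b), $q_{\max}<p_{\min}$.} For $\lambda>0$, $(H_{G2})$ together with the Poincaré fact makes $\Phi$ grow like $\|u\|^{p_{\min}}$ for large $\|u\|$, while $\Psi\le C\|u\|^{q_{\max}}$ by the compact embeddings under $(H_{pq})$. Hence $J_\lambda$ is coercive. It is also sequentially weakly lower semicontinuous: $\Phi$ is convex and continuous, and $\Psi$ is weakly continuous by compactness. (In the critical case $q_i=p_i^*$ with $\beta_i\equiv0$ this needs a separate remark, or we restrict to subcritical exponents.) So $J_\lambda$ attains a global minimum. To see that the minimum is negative, fix $i\in\mathcal I_{\alpha\beta}$ and take $t\varphi$ for small $t$. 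By $(H_{G3})$, $\Phi(t\varphi)\le C(t+t^{p_i})$ near zero, which is not small enough; instead use $G_i(x,y)\le a(y^2/2+\dots)$, or better choose $\varphi$ constant. Then $\nabla\varphi=0$ and $J_\lambda(t)=O(t^{p_{\min}})-\lambda O(t^{q})$ with $q\le q_{\max}<p_{\min}$, which is negative for small $t$. The minimizer is therefore nontrivial, and every $\lambda>0$ is an eigenvalue.

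\textbf{Case (a), $p_{\max}<q_{\min}$.} Here we use the Mountain Pass Theorem. Since $y_{0i}=0$, $(H_{G2})$ gives $\Phi(u)\ge c\min(\|u\|^{p_{\max}},\|u\|^{p_{\min}})$, and $\Psi\le C\|u\|^{q_{\min}}$ near $0$. Hence $J_\lambda\ge\epsilon>0$ on a small sphere. For the far point, $(h_{G4})$ integrates to $G_i(x,ty)\le t^{p_i}G_i(x,y)+Ct^{\delta_i}\dots$, so the growth of $\Phi$ along rays is at most $t^{p_{\max}}$, while $\Psi(tu)\sim t^{q_{\min}}$; thus $J_\lambda(tu)\to-\infty$.

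The main obstacle is the Palais--Smale condition. For a PS sequence we estimate $J_\lambda(u_n)-\frac1{q_{\min}}\langle J'_\lambda(u_n),u_n\rangle$. Since $q_i\ge q_{\min}$, the $\Psi$-part is $\le0$ and has the right sign. For the $G$-part we need
\[
G_i-\tfrac1{q_{\min}}yG_{iy}\ge\Big(1-\tfrac{p_i}{q_{\min}}\Big)G_i-\tfrac{c_i}{q_{\min}}y^{\delta_i},
\]
which follows from $(h_{G4})$. Combined with $G_i\ge d_iy^{p_i}$ and $\delta_i<p_i$, this gives boundedness of $(u_n)$. Strong convergence then follows from the $(S_+)$ property of $\Phi'$, which we get from strict convexity and monotonicity of $\mathcal A$, plus compactness of $\Psi'$. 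The mountain-pass critical point is nontrivial since its critical value is at least $\epsilon>0$, so every $\lambda>0$ is an eigenvalue.
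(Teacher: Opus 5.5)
Your proposal is correct and is essentially the paper's proof. The shared ingredients are:
\begin{itemize}
\item the same test with $u$ itself to exclude $\lambda\le 0$;
\item the equivalent norm $\|\cdot\|_{l\rho\gamma}$, obtained from Lemma~\ref{le:normatransmission} with $(\rho_l,\gamma_l)$ in place of $(\alpha_j,\beta_j)$;
\item in case (a), an Ambrosetti--Rabinowitz-type estimate from $(h_{G4})$, followed by the $(S_+)$ property and the Mountain Pass Theorem (you take $\theta=q_{\min}$, the paper any $\theta\in(p_{\max},q_{\min})$, which makes no difference);
\item in case (b), coercivity, the direct method and small constant test functions.
\end{itemize}

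The differences are minor:
\begin{itemize}
\item For the far point in (a), the paper simply uses constants $(t,t)$, for which $K_{p_1p_2}=0$, so $(h_{G4})$ is not needed there. Your ray argument also works, but the correction term from integrating $(h_{G4})$ is $\tfrac{c_i}{p_i-\delta_i}y^{\delta_i}(t^{p_i}-t^{\delta_i})$, which is of order $t^{p_i}$, not $t^{\delta_i}$.
\item The $(S_+)$ property does not follow from strict monotonicity alone. It needs $(H_{G2})$, $(H_{G3})$ and the a.e.-gradient/Vitali argument of Lemma~\ref{le:in}.
\item Your caveat about $q_i=p_i^*$ is vacuous in (b), where $q_i<p_{\min}\le p_i<p_i^*$. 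It does matter for the compactness of $\Psi'$ in the Palais--Smale step of (a), a point the paper's proof also leaves unaddressed.
\end{itemize}
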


\begin{remark}\label{re:gh4examples}
All the functions in Remark~\ref{re:exempleGF} satisfy $(H_{G2})$ with
$y_{0i}=0$ and $(h_{G4})$, except for \eqref{eq:g3}, which satisfies $(H_{G2})$
with $y_{0i}>0$. However, the perturbed function
\[
G(y):=\frac{(\sqrt{k^2+y^2}-k)^p}{p}+\frac{y^p}{p}, \quad y\ge0,
\]
satisfies both assumptions with $\delta=p-1$ in $(h_{G4})$.

\noindent
An example satisfying $(H_{G1})$, $(H_{G2})$ with $y_{0i}=0$, $(H_{G3})$,
and $(h_{G4})$ with $\delta=p-\varepsilon$ is
\begin{equation}\label{eq:0G6}
G(y):=\frac{y^p}{p}\big(2-(1+y)^{-\varepsilon}\big),\quad y\ge 0,
\quad \varepsilon\in(0,p-1).
\end{equation}
Note that assumption $(h_{G4})$ does not follow from $(H_{G1})$--$(H_{G3})$.
For $p\ge2$, the function
\[
G(y):=\frac{y^p}{p}\Bigl(1+\frac{\ln(1+y)}{1+\ln(1+y)}\Bigr),\quad y\ge0,
\]
satisfies $(H_{G1})$--$(H_{G3})$ but not $(h_{G4})$. Therefore, Theorem~\ref{teorema2} does not apply in this example, whereas
Theorem~\ref{teorema1} still applies.
\end{remark}

As an immediate consequence of Theorems~\ref{teorema1} and~\ref{teorema2}, we obtain the following single-domain result.

\begin{corollary}[Single-domain case]\label{cor:singledomain}
Assume that the operators and coefficients coincide in the two subdomains,
that is, $p_1=p_2=:p$ and $G_1=G_2=:G$, and that the corresponding
exponents and weight functions are identical.
If assumptions $(H_{G1})$, $(H_{G2})$, $(H_{G3})$, $(H_{pq})$,
$(H_{\zeta\eta})$, $(H_{\alpha\beta})$, and $(H_{\rho\gamma})$
hold on $\Omega$, then for every $r>0$ there exists a sequence of eigenpairs
$\big((\lambda_n,\pm u_n)\big)_n$, with $u_n\in\mathcal{M}_r$, such that
$\lambda_n\to\infty$.

Moreover, if either
\begin{enumerate}
\item $(H_{G2})$ with $y_0=0$, $(h_{G4})$, and $(h_{\rho\gamma})$ hold on $\Omega$ and
$q>\max\{p,\zeta,\eta\}$, or
\item $(h_{\rho\gamma})$ holds on $\Omega$ and $q<\min\{p,\zeta,\eta\}$,
\end{enumerate}
then the set of eigenvalues of the corresponding single-domain problem
is $(0,\infty)$.
\end{corollary}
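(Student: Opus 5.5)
The plan is to derive the corollary by specializing Theorems~\ref{teorema1} and~\ref{teorema2} to identical data. First choose the splitting. Take any Lipschitz hypersurface $\Sigma$ dividing $\Omega$ into two Lipschitz subdomains $\Omega_1,\Omega_2$ as in Section~\ref{intro}, so that $\mathcal H^{N-1}(\Sigma)>0$ and $\mathcal H^{N-1}(\Gamma_i)>0$. Then set $G_i:=G|_{\Omega_i\times[0,\infty)}$, $p_i:=p$, $\alpha_i:=\alpha|_{\Omega_i}$, $\beta_i:=\beta|_{\Gamma_i}$, and define $\rho_i,\gamma_i$ in the same way, with $q_i=q$, $\zeta_i=\zeta$, $\eta_i=\eta$.

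Next, check that this splitting changes neither the space nor the weak formulation. Since $p_1=p_2$, we have $W=W^{1,p}(\Omega)$, and $\mathcal W$ is identified with it through the trace condition. For $u\in W^{1,p}(\Omega)$, the weak form of \eqref{eq:1.0}--\eqref{eq:1.00} is the sum of the integrals over $\Omega_1$ and $\Omega_2$ together with the boundary integrals over $\Gamma_1\cup\Gamma_2$. Because $\mathcal H^{N-1}(\partial\Omega\setminus(\Gamma_1\cup\Gamma_2))=0$, this sum equals the single-domain weak form. The flux condition on $\Sigma$ is encoded automatically, because test functions in $W^{1,p}(\Omega)$ are continuous across $\Sigma$. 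Hence the eigenvalues and eigenfunctions of the two problems coincide, and the set $\mathcal M_r$ is the same in both.

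Then transfer the hypotheses from $\Omega$ to the pieces. $(H_{G1})$--$(H_{G3})$, $(h_{G4})$ and $(H_{\zeta\eta})$ restrict directly to each $\Omega_i$. $(H_{pq})$ also restricts, since a Lipschitz subdomain has the same Sobolev and trace exponents $p^*,p_*$. For $(H_{\alpha\beta})$, the condition $\int_\Omega\alpha+\int_{\partial\Omega}\beta>0$ forces at least one $m$ to satisfy $\int_{\Omega_m}\alpha_m+\int_{\Gamma_m}\beta_m>0$, so $\mathcal I_{\alpha\beta}\neq\emptyset$. $(H_{\rho\gamma})$ holds because $p_{\min}=p$. $(h_{\rho\gamma})$ holds because the positive integral splits and one of the two pieces must be positive.

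With the hypotheses in place, the first assertion follows directly from Theorem~\ref{teorema1}.

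For the second assertion, the main obstacle is that Theorem~\ref{teorema2} is stated for $\zeta_i=\eta_i=p_i$, whereas the corollary only assumes $q>\max\{p,\zeta,\eta\}$ or $q<\min\{p,\zeta,\eta\}$.

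\begin{itemize}
\item In the case $\zeta=\eta=p$, the conditions of the corollary reduce exactly to (a) $p_{\max}=p<q=q_{\min}$ or (b) $q_{\max}=q<p=p_{\min}$, and Theorem~\ref{teorema2} applies.
\item In the general case, I would rerun the proof of Theorem~\ref{teorema2}, i.e.\ its coercivity and Mountain Pass arguments, with the lower-order terms $\frac1\zeta\int\rho|u|^\zeta$ and $\frac1\eta\int\gamma|u|^\eta$.
\begin{itemize}
\item In case 2, where $q<\min\{p,\zeta,\eta\}$, the energy is still coercive, because the terms with $\rho,\gamma\ge0$ are nonnegative. Its global minimum is negative, since the $q$-term dominates near zero along a function with $\int\alpha|u|^q+\int\beta|u|^q>0$. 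This gives a nontrivial critical point for every $\lambda>0$.
\item In case 1, where $q>\max\{p,\zeta,\eta\}$, the geometry near $0$ is secured by the positivity of $\rho,\gamma$ together with $(H_{G2})$ with $y_0=0$ and an equivalent-norm estimate. The superlinear growth of the $q$-term gives the mountain pass geometry. The Palais--Smale condition should follow from $(h_{G4})$ via the usual combination $J(u)-\frac1q\langle J'(u),u\rangle$, using $\zeta,\eta<q$.
\end{itemize}
The part I expect to need the most checking is the bound near zero in case 1 when $\zeta\ne p$. There I would use that the $\rho,\gamma$ terms are nonnegative and that $\|\nabla u\|_p^p+\int\rho|u|^\zeta+\int\gamma|u|^\eta$ controls a norm on small balls.
\end{itemize}

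Finally, conversely, every eigenvalue is positive. Test the equation with $u$: the left-hand side is positive because $G_y(x,y)y\ge0$ and $\rho,\gamma\ge0$ with positive integral on some piece, and $u\ne0$ cannot be constant with zero energy. Hence $\lambda>0$, and the set of eigenvalues is exactly $(0,\infty)$.
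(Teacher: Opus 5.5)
Your overall route is the paper's. The first assertion is Theorem~\ref{teorema1} for identical data. The second is Theorem~\ref{teorema2} combined with Remark~\ref{re:zetaetaactive}, and that remark is exactly the extension to $\zeta,\eta\neq p$ that you propose to re-prove (the paper states it without proof). Your hypothesis transfer is fine, and so is your positivity argument for eigenvalues. In case~1, taking $\theta=q$ in the Palais--Smale combination is a legitimate simplification, since there is only one exponent $q$.

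\textbf{Case~2 coercivity.} This step fails as written. Nonnegativity of the $\rho,\gamma$ terms is not what gives coercivity. $K$ vanishes on constants, so if you discard these terms, the remaining lower bound $K(u)-\lambda\mathcal H_{q_1q_2}(u)$ tends to $-\infty$ along $u\equiv t$, $t\to\infty$. The potential terms must instead be used to control the zero-order part of the norm. Concretely, run the proof of Lemma~\ref{le:normatransmission} with $(\rho,\gamma,\zeta,\eta)$ in place of $(\alpha,\beta,q)$. Use $(h_{\rho\gamma})$ for definiteness on constants and $(H_{\zeta\eta})$ for compactness. This shows that $\|\nabla u\|_p+\bigl(\int_\Omega\rho|u|^\zeta\,dx\bigr)^{1/\zeta}+\bigl(\int_{\partial\Omega}\gamma|u|^\eta\,d\sigma\bigr)^{1/\eta}$ is an equivalent norm. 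The $T$/$M$ argument around \eqref{eq:defTM} then yields $\|\nabla u\|_p^p+\int_\Omega\rho|u|^\zeta\,dx+\int_{\partial\Omega}\gamma|u|^\eta\,d\sigma\ge c\,\|u\|^{\min\{p,\zeta,\eta\}}$ for large $\|u\|$. Only at this point does $q<\min\{p,\zeta,\eta\}$ give coercivity. This large-norm estimate is also what bounds Palais--Smale sequences in case~1; the small-ball estimate you mention is not enough there.

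\textbf{Case~2 negative minimum.} Exhibit the negative value on a constant $u\equiv t$ with $t$ small, as the paper does, so that $K=0$. Assumptions $(H_{G1})$--$(H_{G3})$ only give $G(x,y)=o(y)$ as $y\to0^+$, not $O(y^p)$. For instance, take $G(y)=y^p/p+y^r/r$ with $1<r<q<p$, which is admissible. Then $K(tu)\sim \frac{t^r}{r}\int_\Omega F(\nabla u)^r\,dx\gg t^q$ along every nonconstant $u$, so the $q$-term need not dominate near zero along an arbitrary function with $\int\alpha|u|^q+\int\beta|u|^q>0$.
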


\subsection{Auxiliary results}

\begin{remark}\label{pr:PF}
The norm $F$ satisfies:
\begin{equation}\label{eq:Euler}
\nabla_\xi F (\xi)\cdot \xi = F (\xi)\quad\text{for all } \xi\in \mathbb{R}^N,\quad\xi \neq 0.
\end{equation}
\begin{equation}\label{eq:grad}
\nabla_\xi F (t\xi) = \operatorname{sgn}(t) \nabla_\xi F (\xi)\quad\text{for all }
 \xi \neq 0\quad\text{and  all }t \neq 0.
\end{equation}
\begin{equation}\label{eq:equiv}
m |\xi| \leq F (\xi) \leq M |\xi|
\quad\text{for all } \xi \in \mathbb{R}^N,
\end{equation}
for some positive constants $m, M$.
\end{remark}
For convenience, we define $\widetilde{G}_i:\Omega_i\times\mathbb{R}^N\to\mathbb{R}$ by
\begin{equation}\label{eq:01.n}
\widetilde{G}_i(x, z)
:= G_i\big(x,F(z)\big),\quad
(x,z)\in\Omega_i\times\mathbb{R}^N.
\end{equation}
For $\widetilde{u}=(u_1, u_2)\in \mathcal{W}$,  we set
\begin{equation}\label{eq:1.n}
\begin{aligned}
K_{p_1p_2}(\widetilde{u})&:=\sum_{i=1}^2 \int_{\Omega_i}G_i(x,  F(\nabla u_i(x)))\,dx,\\
k_{\zeta\eta}(\widetilde{u})&:=\sum_{i=1}^2 \frac{1}{\zeta_i}\int_{\Omega_i}\rho_i|u_i|^{\zeta_i}\,dx
+ \sum_{i=1}^2 \frac{1}{\eta_i}\int_{\Gamma_i}\gamma_i |u_i|^{\eta_i}\,d\sigma,\\
\mathcal{H}_{q_1q_2}(\widetilde{u})&:=\sum_{i\in \mathcal I_{\alpha\beta}}\frac{1}{q_i}\left(\int_{\Omega_i}\alpha_i|u_i|^{q_i}\,dx
+ \int_{\Gamma_i}\beta_i| u_i|^{q_i}\,d\sigma\right).
\end{aligned}
\end{equation}
\begin{remark}\label{re:estutile}
We collect below some estimates which will be used repeatedly in the sequel.

By $(H_{G3})$, together with \eqref{eq:grad} and \eqref{eq:equiv}, we obtain
\begin{equation}\label{eq:estgi}
\begin{aligned}|\nabla_\xi& \widetilde{G}_i(x, z)| = | {G}_{iy}(x, F(z)) \nabla_\xi F(z)|\le C_i a_i(x)\big(1 + |F(z)|^{p_i-1}\big)\\
&\le \overline a_i(x)\big(1 + \ |z|^{p_i-1}\big)\quad\text{for a.e. }x\in \Omega_i \quad\text{and all }z\in\mathbb{R}^N,\quad z\neq 0,
\end{aligned}
\end{equation}
where $\overline{a}_i(x):=a_i(x)C_i\max\,\{1, M^{p_i-1}\}$ and $C_i := \max_{F(\xi)=1}|\nabla F(\xi)| > 0.$

In addition, by $(H_{G2})$, there exists $d_i>0$ such that
\begin{equation}\label{eq:HG2nou}
G_i(x,y)\ge d_i y^{p_i}- D_i
\quad\text{for a.e. }x\in\Omega_i\quad\text{and all }y\ge 0,
\end{equation}
where $D_i=d_i  y_{0i}^{p_i}.$ Moreover, integrating over $\Omega_i$ and combining with \eqref{eq:equiv},
this leads to
\begin{equation}\label{eq:mgK}
\begin{aligned}
K_{p_1p_2}(\widetilde u)
&=\sum_{i=1}^2\int_{\Omega_i}G_i\big(x, F(\nabla u_i)\big)\,dx\ge \Big(\sum_{i=1}^2 d_i \int_{\Omega_i}F(\nabla u_i)^{p_i}\,dx-C_{2i}\Big)\\
&\ge C_{1}\sum_{i=1}^2\|\nabla u_i\|_{ip_i}^{p_i}-C_{2}\quad\text{for all }\widetilde u=(u_1,u_2)\in\mathcal W,
\end{aligned}
\end{equation}
where $C_1:=\min\{d_1 m^{p_1}, d_2 m^{p_2}\}>0 $ and $C_2:=\sum_{i=1}^2 d_i  y_{0i}^{p_i}\operatorname{meas}\Omega_i.$ 

Finally, by the convexity of $y\mapsto G_i(x,y)$ on $[0,\infty)$
and the condition $G_i(x,0)=0$ in $(H_{G1})$, we have
\begin{equation}\label{eq:yGy}
yG_{iy}(x,y)\ge G_i(x,y)\quad\text{for a.e. } x\in\Omega_i\quad\text{and all }y\ge 0.
\end{equation}
\end{remark}
We define the functional $\mathcal{J}:\mathcal{W}\to\mathbb{R}$ by
$
\mathcal{J}(\widetilde{u})
:= K_{p_1p_2}(\widetilde{u}) + k_{\zeta\eta}(\widetilde{u}),
~ \widetilde{u}\in \mathcal{W}.$

Under assumptions $(H_{G1})$, $(H_{G3})$, and $(H_{\zeta\eta})$,
the functional $\mathcal{J}$ is weakly lower semicontinuous on
$\mathcal W$ and belongs to $C^1(\mathcal W; \mathbb R)$, with derivative given by 
\begin{equation}\label{eq:derivataJ}
\begin{aligned}
\langle \mathcal{J}'\big((u_1, u_2)\big),(h_1, h_2)\rangle
 &=\sum_{i=1}^2\Big(
    \int_{\Omega_i}
    G_{iy} \big(x,F(\nabla u_i) \big)\nabla_\xi F(\nabla u_i)\cdot\nabla h_i\,dx \\
   &\quad +\int_{\Omega_i}\rho_i |u_{i}|^{\zeta_i-2}u_{i} h_i\,dx
    +\int_{\Gamma_i}\gamma_i |u_{i}|^{\eta_i-2}u_{i} h_i\,d\sigma    \Big).
\end{aligned}
\end{equation}
The $C^1$-regularity of $\mathcal J$ follows from standard arguments (see, e.g., \cite[Section~26.5]{ZIIB} and \cite[Lemma~2.16]{Wi}).
Since $G_i$ is convex in the second argument, the functional
$K_{p_1p_2}$ is weakly lower semicontinuous on $\mathcal W$ (see \cite[Corollary 3.9]{Br}).
Moreover, the compact embeddings in Remark~\ref{re:esob} imply strong convergence of the potential terms;
hence $\mathcal J$ is weakly lower semicontinuous.

By Definition~\ref{de:sol}, $\lambda$ is an eigenvalue of
\eqref{eq:1.0}--\eqref{eq:1.00} if and only if there exists
$\widetilde{u}_\lambda=(u_{1\lambda}, u_{2\lambda})\in \mathcal W\setminus\{0\}$, such that
 for all $(v_1,v_2)\in\mathcal W$,
\begin{equation}\label{eq:1.def}
\langle \mathcal J'(\widetilde{u}_\lambda),(v_1,v_2)\rangle
=
\lambda \sum_{i\in \mathcal I_{\alpha\beta}}
\Big(
\int_{\Omega_i}\alpha_i |u_{i\lambda}|^{q_i-2}u_{i\lambda} v_i\,dx
+\int_{\Gamma_i}\beta_i |u_{i\lambda}|^{q_i-2}u_{i\lambda} v_i\, d\sigma
\Big).
\end{equation}
The above variational characterization of the eigenvalues of problem
\eqref{eq:1.0}--\eqref{eq:1.00} can be obtained by arguments similar to those in \cite[Proposition~1.1]{BMP}.

We recall the definition of the Krasnosel'skii genus.

Let $X$ be a real Banach space and denote by $\mathcal E$ the family of all nonempty
closed and symmetric subsets of $X\setminus\{0\}$.
For $A\in\mathcal E$, the Krasnosel'skii genus $\gamma(A)$ is defined as the smallest nonegative
integer $n$ such that there exists an odd continuous mapping
$A\mapsto\mathbb{R}^n\setminus\{0\}$. If $A=\emptyset$, we set $\gamma(A)=0$, while
$\gamma(A)=\infty$ if no such finite $n$ exists.

We also recall the Palais--Smale condition on a constraint manifold
(see, e.g., \cite[Definition 44.13]{Z}).
\begin{definition}\label{de:dif}
Let $\mathbf M\subset X$, and
$f: X\to\mathbb R$ be a functional admitting a tangential derivative with respect to $\mathbf M$,
$f'_{\mathbf M},$ at each point of $\mathbf M$.
For $c\in\mathbb R$, we say that $f$ satisfies the  Palais--Smale condition
$(PS)_c$ with respect to $\mathbf M$  if every sequence $\big(u_n\big)_n\subset\mathbf M$ such that
$
f(u_n)\to c$ and $\|f'_{\mathbf M}(u_n)\|_{(T_{u_n}\mathcal M_r)^*}\to 0$
has a convergent subsequence in $X$.
We say that $f$ satisfies the Palais--Smale condition on $\mathbf M$ 
if and only if $(PS)_c$ is satisfied for all $c\in \mathbb R$.
\end{definition}
We conclude this section with a Lusternik--Schnirelmann type result on $C^1$-manifolds (see \cite[Corollary~4.1, p.~132]{Sz}).
\begin{theorem}\label{te:Sz}
Let $\mathbf M$ be a closed symmetric $C^1$-submanifold of a real Banach space $X$
such that $0\notin\mathbf M$, and let $f\in C^1(\mathbf M; \mathbb{R})$ be even and
bounded from below. For $j\in\mathbb{N}$, define
$c_j := \inf_{A\in\Lambda_j}\sup_{x\in A} f(x)$
where
$
\Lambda_j=\{A\subset\mathbf M;~ A\in\mathcal E,~ \gamma(A)\ge j,~ \text{and }A
\text{ is compact}\}.
$
If $\Lambda_k\neq\emptyset$ for some $k\ge1$ and if $f$ satisfies the
Palais--Smale condition at all levels $c_j$, $j\in\{1,\dots,k\}$, then $f$ has
at least $k$ distinct pairs of critical points.
\end{theorem}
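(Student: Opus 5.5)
The statement is the abstract Lusternik--Schnirelmann principle of Theorem~\ref{te:Sz}. I would prove it by the standard minimax-plus-equivariant-deformation scheme, adapted to a $C^1$ Banach submanifold.

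\emph{Step 1: the levels $c_j$.} Since $f$ is bounded below and $\Lambda_1\supset\Lambda_2\supset\dots\supset\Lambda_k\neq\emptyset$, each $c_j$ with $j\le k$ is a finite number. Moreover $-\infty<\inf_{\mathbf M}f\le c_1\le c_2\le\dots\le c_k<\infty$. For a level $c$, let $K_c:=\{x\in\mathbf M;\ f(x)=c,\ f'_{\mathbf M}(x)=0\}$. Because $f$ is even, $K_c$ is symmetric, and $0\notin K_c$ because $0\notin\mathbf M$. By the Palais--Smale condition at $c=c_j$, $K_c$ is compact.

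\emph{Step 2: equivariant deformation lemma.} This is the main obstacle. Let $c=c_j$ and let $N$ be any symmetric open neighbourhood of $K_c$ in $\mathbf M$. I claim there exist $\varepsilon>0$ and an odd homeomorphism $\eta:\mathbf M\to\mathbf M$ such that
\[
\eta\big(f^{c+\varepsilon}\setminus N\big)\subset f^{c-\varepsilon},
\qquad f^{a}:=\{x\in\mathbf M;\ f(x)\le a\}.
\]
The proof would go as follows.
\begin{enumerate}[label=(\roman*)]
\item Use (PS)$_c$ to get $\varepsilon_0>0$ and $\delta>0$ with $\|f'_{\mathbf M}(x)\|\ge\delta$ on $f^{-1}([c-\varepsilon_0,c+\varepsilon_0])\setminus N$.
\item Build a locally Lipschitz tangent pseudo-gradient field $v$ on $\{f'_{\mathbf M}\neq0\}$. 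Since $\mathbf M$ is only $C^1$, I would construct it locally by choosing a tangent vector at a point, transporting it by a local chart (or by a continuous family of projections onto $T_x\mathbf M$), and gluing with a locally Lipschitz partition of unity.
\item Make $v$ odd by replacing it with $\tfrac12\big(v(x)-v(-x)\big)$. This is legitimate because $f'_{\mathbf M}$ is odd, so the pseudo-gradient inequalities are preserved.
\item Flow along $-\chi v$, with a symmetric even cutoff $\chi$ supported away from $N/2$ and outside the strip. The flow is odd, stays on $\mathbf M$ since the field is tangent, and lowers $f$ by at least $\delta^2$-proportional amounts. Hence $f$ drops by $2\varepsilon$ in finite time for $\varepsilon$ small.
\end{enumerate}
Care is needed to ensure that flow lines do not leave $\mathbf M$. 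I would handle this via local charts and the closedness of $\mathbf M$.

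\emph{Step 3: critical values and multiplicity.} Suppose $c:=c_j=c_{j+1}=\dots=c_{j+m}$ for some $m\ge0$ with $j+m\le k$. I claim $\gamma(K_c)\ge m+1$. Otherwise, by the continuity property of the genus for compact sets, there is a closed symmetric neighbourhood $\overline N$ of $K_c$ with $\gamma(\overline N)=\gamma(K_c)\le m$. (If $K_c=\emptyset$, take $N=\emptyset$.) Choose $A\in\Lambda_{j+m}$ with $\sup_A f\le c+\varepsilon$. Then:
\begin{itemize}
\item $B:=\overline{A\setminus \overline N}$ (or $A$ minus the open interior) is compact, symmetric and closed.
\item By subadditivity of the genus, $\gamma(B)\ge \gamma(A)-\gamma(\overline N)\ge j$.
\item $\eta(B)$ is compact and symmetric. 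Since $\eta$ is odd and continuous, $\gamma(\eta(B))\ge\gamma(B)\ge j$, so $\eta(B)\in\Lambda_j$.
\item But $\sup_{\eta(B)}f\le c-\varepsilon<c_j$, which contradicts the definition of $c_j$.
\end{itemize}
Hence each $c_j$ is a critical value. If all the $c_j$ are distinct, this yields $k$ distinct pairs $\pm x_j$. If some level is repeated, then $\gamma(K_c)\ge2$, so $K_c$ is infinite: a finite symmetric set not containing $0$ has genus $1$. In either case $f$ has at least $k$ distinct pairs of critical points.
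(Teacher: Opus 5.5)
The paper does not prove this statement; it quotes it from Szulkin \cite[Corollary~4.1]{Sz}. Your Steps 1 and 3 are the standard genus argument and are fine: finiteness of the $c_j$, compactness of $K_c$ from (PS)$_c$, continuity of the genus at the compact set $K_c$, subadditivity, monotonicity under odd continuous maps, and genus $1$ for finite symmetric sets.

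\textbf{The gap is in Step 2.} This is the only place where ``$\mathbf M$ is merely $C^1$'' matters, and there your construction does not work.
\begin{itemize}
\item On a $C^1$ manifold, $T_x\mathbf M$ depends on $x$ only continuously. So does any family of projections $P_x$ onto it, and hence $x\mapsto P_xw$ is only continuous.
\item A vector transported by a chart gives a field that is Lipschitz in that chart only, because transition maps are only $C^1$. A partition-of-unity combination of such fields is in general only continuous.
\item Locally Lipschitz tangent fields need not exist at all. On the $C^1$ curve $\{(t,h(t))\}\subset\mathbb R^2$ with $h(t)=|t|^{3/2}$, take a tangent field $a(t)\,(1,h'(t))$ that is Lipschitz in the ambient norm near the origin. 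Then $a$ and $ah'$ are Lipschitz in $t$, so if $a(0)\neq0$ the function $h'=(ah')/a$ would be Lipschitz near $0$, which it is not. Hence $a(0)=0$.
\item For a merely continuous field, integral curves are not unique. In an infinite-dimensional Banach space they need not exist at all, since Peano's theorem fails.
\end{itemize}
So neither the flow in (iv) nor the odd homeomorphism $\eta$ is available. This matters for the paper itself: $\mathcal M_r=\mathcal H_{q_1q_2}^{-1}(r)$ is in general only $C^1$, because $\mathcal H_{q_1q_2}'$ is not locally Lipschitz when some $q_i<2$.

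A warning sign is that your argument uses compactness of the sets in $\Lambda_j$ only to keep $\eta(B)$ in the class. With a genuine deformation homeomorphism you could work with closed sets, as in the classical $C^2$ (or $C^{1,1}$) theory. In Szulkin's result, the compactness built into $\Lambda_j$ is exactly what replaces the flow.

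\textbf{How to close the gap.} Replace Step 2 by a deformation lemma for compact sets, which is all Step 3 actually uses: there is $\varepsilon>0$ such that every compact symmetric $B\subset f^{c+\varepsilon}\setminus N$ admits an odd continuous map $\eta:B\to\mathbf M$ with $\eta(B)\subset f^{c-\varepsilon}$. The genus is monotone under odd continuous maps and $\eta(B)$ is compact, so your contradiction goes through verbatim. Such an $\eta$ can be built without integrating anything:
\begin{itemize}
\item Take a merely continuous odd tangent field $v$ with $\|v\|\le 1$ and $f'_{\mathbf M}(x)v(x)\ge\delta/2$ on the strip outside a smaller neighbourhood $N'$ of $K_c$.
\item Move points by finitely many small steps $x\mapsto r\big(x-s\chi(x)v(x)\big)$. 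Here $r$ is a local $C^1$ graph retraction onto $\mathbf M$ (projection along a complement of a tangent space, so $Dr(x)=\mathrm{id}$ on $T_x\mathbf M$), and $\chi$ is a cutoff supported in its domain.
\item Keep $\eta$ odd by treating each chart together with its antipodal image. These are disjoint for small charts, since $2\|x\|\ge 2\operatorname{dist}(0,\mathbf M)>0$.
\end{itemize}
Because $f\circ r$ is $C^1$, for $s\le s_0$ one has, uniformly on compact sets,
\[
f\big(r(x-sv(x))\big)\le f(x)-s\delta/4,\qquad \big\|r(x-sv(x))-x\big\|\le 2s .
\]
The threshold $s_0$ depends on $B$ and its successive compact images. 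However, the ratio of decrease to displacement is at least $\delta/8$ independently of $B$. So $\varepsilon$ can be fixed in advance in terms of $\delta$, $\varepsilon_0$ and $\operatorname{dist}(\mathbf M\setminus N,N')$.
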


\section{ Proof of Theorem~\ref{teorema1}}\label{se:teorema1}

The proof of Theorem~\ref{teorema1} will be derived from a sequence of intermediate results. 
\begin{lemma}\label{le:ig}
Let $r>0$. Assume that $(H_{\alpha\beta})$ and $(H_{pq})$ are fulfilled.
Then  $\gamma(\mathcal M_r)=\infty$ .
\end{lemma}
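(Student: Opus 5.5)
To show $\gamma(\mathcal M_r)=\infty$, I will prove that for every $n\in\mathbb N$ the set $\mathcal M_r$ contains a compact symmetric subset of genus at least $n$. By monotonicity of the genus this gives the claim, and it also yields $\Lambda_n\neq\emptyset$ in Theorem~\ref{te:Sz} for all $n$. The construction uses radial projection of the unit sphere of a finite-dimensional subspace onto $\mathcal M_r$.

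\textbf{Step 1 (a region where the weight is positive).} Fix $m\in\mathcal I_{\alpha\beta}$, which exists by $(H_{\alpha\beta})$. Then either $\alpha_m>0$ on a subset of $\Omega_m$ of positive measure, or $\beta_m>0$ on a subset of $\Gamma_m$ of positive $\mathcal H^{N-1}$-measure.

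\textbf{Step 2 (test functions with disjoint supports).} In the first case, I choose $n$ disjoint balls $B_1,\dots,B_n\subset\Omega_m$ with $\int_{B_k}\alpha_m\,dx>0$. This is possible by Lebesgue density points: the positivity set has infinitely many density points, and small balls around distinct ones are disjoint. I then take $\varphi_k\in C_c^\infty(B_k)$, $\varphi_k\ge0$, with $\int\alpha_m\varphi_k^{q_m}>0$. In the second case, I pick $n$ boundary points of $\Gamma_m$ (away from $\overline\Sigma$) that are density points of $\{\beta_m>0\}$, and small disjoint balls $B_k$ around them, with $\overline{B_k}\cap\overline\Sigma=\emptyset$ and $\overline{B_k}\cap\overline{\Omega_{3-m}}=\emptyset$. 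I take $\varphi_k\in C_c^\infty(B_k)$ restricted to $\Omega_m$, positive near the chosen points, so that $\int_{\Gamma_m}\beta_m\varphi_k^{q_m}\,d\sigma>0$. In both cases I extend $\varphi_k$ by $0$ to $\Omega_{3-m}$. Since the supports avoid $\Sigma$, the traces vanish there, so $\widetilde\varphi_k\in\mathcal W$. Set $E_n=\operatorname{span}\{\widetilde\varphi_1,\dots,\widetilde\varphi_n\}$; it is $n$-dimensional because the supports are disjoint.

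\textbf{Step 3 (radial projection).} Write $\mathcal H:=\mathcal H_{q_1q_2}$. Here $\mathcal H$ is continuous on $\mathcal W$ by Remark~\ref{re:esob} and $(H_{pq})$ (the critical case $q_i=p_i^*$ is a continuous, not compact, embedding, but that suffices). It is even, and for $u=\sum t_k\widetilde\varphi_k$ the disjointness of supports gives
\[
\mathcal H(u)=\frac1{q_m}\sum_k |t_k|^{q_m}c_k,\qquad c_k>0.
\]
Hence $\mathcal H(u)>0$ for $u\in E_n\setminus\{0\}$. The map $s\mapsto\mathcal H(su)=s^{q_m}\mathcal H(u)$ is strictly increasing from $0$ to $\infty$, because only the exponent $q_m$ enters on $E_n$. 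This avoids any difficulty with the mixed exponents $q_1\neq q_2$. So $\pi(u)=(r/\mathcal H(u))^{1/q_m}u$ is well defined, continuous and odd on the unit sphere $S_n$ of $E_n$, with values in $\mathcal M_r$. Its inverse on $\pi(S_n)$ is $v\mapsto v/\|v\|$, which is also continuous and odd. Thus $A_n:=\pi(S_n)$ is compact, symmetric and closed in $\mathcal W\setminus\{0\}$, and it is oddly homeomorphic to $S^{n-1}$. By the Borsuk--Ulam property of the genus, $\gamma(A_n)=n$. Therefore $\gamma(\mathcal M_r)\ge n$ for all $n$, i.e. $\gamma(\mathcal M_r)=\infty$.

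\textbf{Main obstacle.} The delicate point is Step 2 in the case where only $\beta_m$ is positive. There one must produce $n$ functions in $W^{1,p_m}(\Omega_m)$ with disjoint supports whose traces see $\beta_m$, while keeping the supports away from $\Sigma$ so that the transmission condition is preserved. I expect to handle this via density points of $\{\beta_m>0\}$ with respect to $\mathcal H^{N-1}$ on the Lipschitz boundary portion $\Gamma_m$, which is relatively open in $\partial\Omega_m$.
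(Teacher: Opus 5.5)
Your proof is correct and follows the paper's argument. You take bumps with disjoint supports in a region where $\alpha_m$ (or $\beta_m$) is positive and extend them by zero so the transmission condition holds; their span $E_n$ then meets $\mathcal M_r$ in a set oddly homeomorphic to $S^{n-1}$. Your radial image $\pi(S_n)$ is exactly the paper's $\mathcal M_r\cap V_k=\{\sum t_j\widetilde\varphi_j;\ \sum|t_j|^{q_{i_0}}=1\}$, which the paper obtains by normalizing each bump beforehand.

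Your use of density points and your explicit treatment of the boundary case, with the balls kept away from $\overline\Sigma$, supply details the paper only asserts or dismisses as ``similar''.
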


\begin{proof}
By assumption $(H_{\alpha\beta})$, there exists $i_0\in\{1,2\}$ such that $i_0\in \mathcal I_{\alpha\beta},$ i.e.,
\[
\int_{\Omega_{i_0}}\alpha_{i_0}\,dx+\int_{\Gamma_{i_0}}\beta_{i_0}\,d\sigma>0.
\]
Set $
E:=\{x\in\Omega_{i_0};~\alpha_{i_0}(x)>0\},~F:=\{x\in\Gamma_{i_0};~\beta_{i_0}(x)>0\}.
$
Hence, either $\operatorname{meas}(E)>0$ or $\mathcal{H}_{N-1}\,(F)>0.$
We treat the bulk case; the boundary case is similar.

Fix $k\in\mathbb N^*$.
Since $\operatorname{meas}(E)>0$, we may choose $x_1,\dots,x_k\in E$ and 
$\delta_1,\dots,\delta_k>0$ such that the balls $B_{\delta_j}(x_j)\Subset\Omega_{i_0}$,
are pairwise disjoint, and
$\operatorname{meas}(E\cap B_{\delta_j}(x_j))>0$ for every $j\in\{1,\dots,k\}$.
For each fixed $j$, let $\varphi_j\in C_c^\infty(\Omega_{i_0})$ be defined by
\[
\varphi_j(x)=
\begin{cases}
e^{ -\frac{1}{\delta_j^2-|x-x_j|^2}}, & x\in B_{\delta_j}(x_j),\\
0,& \text{otherwise}.
\end{cases}
\]
We have
\[
\theta_j:=\frac1{q_{i_0}}\int_{\Omega_{i_0}}\alpha_{i_0}|\varphi_j|^{q_{i_0}} dx  >0, \quad
\widetilde\varphi_j:=\Big(\big(\tfrac{r}{\theta_j}\big)^{1/q_{i_0}}\varphi_j,\,0\Big)\in\mathcal W,
\]
and $\mathcal{H}_{q_1q_2}(\widetilde\varphi_j)=r$, hence $\widetilde\varphi_j\in\mathcal M_r$. 

Let $V_k:=\text{span}\{\widetilde\varphi_1,\dots,\widetilde\varphi_k\}$.
The family $\{\widetilde \varphi_1,\dots,\widetilde \varphi_k\}$ is linearly independent,
as their supports are pairwise disjoint, hence $\dim V_k=k$. 

On the other hand, for
$\widetilde u=\sum_{j=1}^k t_j\widetilde\varphi_j\in V_k$, we have
$
\mathcal{H}_{q_1q_2}(\widetilde u)=r\sum_{j=1}^k |t_j|^{q_{i_0}}.
$
Therefore,
\[
\mathcal M_r\cap V_k
=\Big\{\sum_{j=1}^k t_j\widetilde\varphi_j;~ \sum_{j=1}^k |t_j|^{q_{i_0}}=1\Big\}
\]
is homeomorphic to $S^{k-1}$.
Hence,
$
\gamma(\mathcal M_r)\ge \gamma(\mathcal M_r\cap V_k)=\gamma(S^{k-1})=k$
(see, e.g., \cite[Corollary~1.3]{Ra74}).
Since $k$ is arbitrary, $\gamma(\mathcal M_r)=\infty$.
\end{proof}
Due to the transmission condition, an equivalent norm can be defined on $\mathcal W$
even in the case where the weights multiplying the parameter
$\lambda$ vanish on one subdomain.
\begin{lemma}\label{le:normatransmission}
Assume that $(H_{pq})$ and $(H_{\alpha\beta})$ are fulfilled. If $j\in \mathcal I_{\alpha\beta}$,  then 
\begin{equation}\label{eq:normahatl}
\|\widetilde u\|_{j\alpha\beta}
:=\sum_{i=1}^2\|\nabla u_i\|_{ip_i} +s_{j\alpha\beta}(u_j),
\quad
\widetilde u=(u_1,u_2)\in\mathcal W,
\end{equation}
defines a norm on $\mathcal W$ which is equivalent to the norm in~\eqref{eq:1.nw}.
Here
\begin{equation}\label{eq:seminorme}
s_{j\alpha\beta}(u_j)
:=\Big(
\int_{\Omega_j}\alpha_j |u_{j}|^{q_j}\,dx
+\int_{\Gamma_j}\beta_j |u_j|^{q_j}\,d\sigma
\Big)^{1/q_j}.
\end{equation}
In addition, 
\begin{equation}\label{eq:lalbe}
\|\widetilde u\|_{j\alpha\beta}
\le \sum_{i=1}^2\|\nabla u_i\|_{ip_i}
+  (q_{j} r)^{1/q_{j}}
\quad \text{for all } \widetilde u\in\mathcal M_r.
\end{equation}
\end{lemma}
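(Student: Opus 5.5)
The plan is to verify the norm axioms directly, prove the easy inequality $\|\widetilde u\|_{j\alpha\beta}\le C\|\widetilde u\|$ by embeddings, and obtain the reverse inequality by a compactness (contradiction) argument. The estimate \eqref{eq:lalbe} on $\mathcal M_r$ is immediate.

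\emph{Norm axioms.} Homogeneity and the triangle inequality follow from Minkowski's inequality in the weighted spaces $L^{q_j}(\Omega_j;\alpha_j\,dx)$ and $L^{q_j}(\Gamma_j;\beta_j\,d\sigma)$. For the latter we apply Minkowski to the pair $(u_j|_{\Omega_j},u_j|_{\Gamma_j})$ viewed in the $\ell^{q_j}$-sum of these two spaces. The only nontrivial axiom is definiteness, and it is subsumed by the lower bound below.

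\emph{Upper bound.} By $(H_{pq})$ and Remark~\ref{re:esob}, $W^{1,p_j}(\Omega_j)$ embeds continuously into $L^{q_j}(\Omega_j)$. If $\beta_j\not\equiv0$, it also embeds into $L^{q_j}(\Gamma_j)$. When $q_j=p_j^*$ we use the continuous, noncompact embedding, which is enough here. Since $\alpha_j,\beta_j\in L^\infty$, this gives $s_{j\alpha\beta}(u_j)\le C\|u_j\|_j$, and hence $\|\widetilde u\|_{j\alpha\beta}\le C\|\widetilde u\|$.

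\emph{Lower bound.} Suppose, by contradiction, that there are $\widetilde u_n\in\mathcal W$ with $\|\widetilde u_n\|=1$ and $\|\widetilde u_n\|_{j\alpha\beta}\to0$. By reflexivity and the compact embeddings $W^{1,p_i}(\Omega_i)\hookrightarrow L^{p_i}(\Omega_i)$, we may pass to a subsequence with $\widetilde u_n\rightharpoonup\widetilde u$ weakly in $\mathcal W$ and $u_{in}\to u_i$ in $L^{p_i}(\Omega_i)$. Since $\nabla u_{in}\to0$ in $L^{p_i}$, it follows that $\nabla u_i=0$, so $u_i\equiv c_i$ on each connected Lipschitz subdomain $\Omega_i$. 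We also get $\|u_{in}\|_{ip_i}\to\|u_i\|_{ip_i}$, and combined with $\nabla u_{in}\to0$ this gives $u_{in}\to u_i$ strongly in $W^{1,p_i}(\Omega_i)$. Hence $\|\widetilde u\|=1$, using that $\mathcal W$ is closed.

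The transmission condition $u_1=u_2$ on $\Sigma$, together with $\mathcal H^{N-1}(\Sigma)>0$, forces $c_1=c_2=:c$. Strong convergence and continuity of the seminorm (by the upper bound argument) then give $s_{j\alpha\beta}(c)=0$, that is,
\[
|c|^{q_j}\Big(\int_{\Omega_j}\alpha_j\,dx+\int_{\Gamma_j}\beta_j\,d\sigma\Big)=0 .
\]
Since $j\in\mathcal I_{\alpha\beta}$, the bracket is positive, so $c=0$. This contradicts $\|\widetilde u\|=1$. Definiteness follows in the same way: if $\|\widetilde u\|_{j\alpha\beta}=0$, then both gradients vanish, both components are equal constants, and the constant is $0$.

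The main obstacle is exactly this step: it requires connectedness of each $\Omega_i$, which we take as part of the Lipschitz subdomain hypothesis, and the trace equality on $\Sigma$ to pass the constant from $\Omega_j$ to the other subdomain. The coupling through $\Sigma$ is what allows the weights to vanish on the other subdomain.

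\emph{Estimate on $\mathcal M_r$.} If $\widetilde u\in\mathcal M_r$, then each summand in the definition of $\mathcal M_r$ is nonnegative, so
\[
\tfrac1{q_j}\Big(\int_{\Omega_j}\alpha_j|u_j|^{q_j}\,dx+\int_{\Gamma_j}\beta_j|u_j|^{q_j}\,d\sigma\Big)\le r .
\]
Thus $s_{j\alpha\beta}(u_j)\le(q_jr)^{1/q_j}$, which gives \eqref{eq:lalbe}.
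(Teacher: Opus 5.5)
Your proof is correct and follows essentially the same route as the paper. The upper bound comes from the embeddings of Remark~\ref{re:esob}; the lower bound comes from a normalized contradiction sequence whose limit has zero gradient, hence is a common constant across $\Sigma$ that must vanish because $j\in\mathcal I_{\alpha\beta}$; and \eqref{eq:lalbe} is read off from the definition of $\mathcal M_r$.

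Your version is slightly more careful at one step. You first upgrade to strong convergence in $W^{1,p_j}(\Omega_j)$, using $\nabla u_{jn}\to0$ and Rellich, and then need only continuity of $s_{j\alpha\beta}$. This covers the critical case $q_j=p_j^*$ allowed by $(H_{pq})$ when $\beta_j\equiv0$, where the paper's appeal to compactness does not literally apply. You also make explicit the connectedness of each $\Omega_i$ and the condition $\mathcal H^{N-1}(\Sigma)>0$, which the paper uses tacitly.
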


\begin{proof}
The inequality
$
\|\widetilde u\|_{j\alpha\beta} \le C_1 \|\widetilde u\|$
follows from  Remark~\ref{re:esob}.
It remains to prove the converse estimate.

Suppose by contradiction that there exists a sequence
$\widetilde u_n=\big(u_{1n},u_{2n}\big)_n \subset \mathcal W\setminus \{0\}$ such that $\|\widetilde u_n\|\ge n\|\widetilde u_n\|_{j\alpha\beta}$ for all $n\in\mathbb N.$ Hence, we may assume that
\begin{equation}\label{eq:ww}
\|\widetilde u_n\|=1
~\text{and }
\|\nabla u_{1n}\|_{1p_1}
+\|\nabla u_{2n}\|_{2p_2}
+s_{j\alpha\beta}(u_{j n}) \to 0\quad\text{as }n\to\infty.
\end{equation}
Fix $i\in\{1,2\}$. By \eqref{eq:ww} we have $\|\nabla u_{in}\|_{ip_i}\to 0$.
Since $(\widetilde u_n)_n$ is bounded in $\mathcal W$, up to a subsequence,
$u_{in}\rightharpoonup u_i ~ \text{in } W^{1, p_i}(\Omega_i).$

By weak lower semicontinuity of norm,
$\|\nabla u_i\|_{ip_i}\le \liminf_{n\to\infty}\|\nabla u_{in}\|_{ip_i}=0$,
hence $u_i$ is constant on  $\Omega_i$.
Moreover, the transmission condition leads to $u_1=u_2$ on  $\Sigma$,
so there exists $c\in\mathbb R$ such that
$u_i= c$ in $\Omega_i$.

Using $(H_{pq})$ and Remark~\ref{re:esob}, we infer that
$
s_{j\alpha\beta}(u_{jn}) \to s_{j\alpha\beta}(u_j).
$
Passing to the limit in \eqref{eq:ww} gives $s_{j\alpha\beta}(u_j)=0$.
By assumption $j\in \mathcal I_{\alpha\beta}$,
this implies $c=0$.

Therefore, $u_i= 0$ in $\Omega_i$.
Since $\nabla u_{in}\to 0$ in $L^{p_i}(\Omega_i)$ and
$u_{in}\to 0$ strongly in $L^{p_i}(\Omega_i)$,
we conclude that $\widetilde u_n=(u_{1n}, u_{2n})\to(0, 0)$ in $\mathcal W$,
contradicting $\|\widetilde u_n\|=1$. 

Inequality \eqref{eq:lalbe} follows from \eqref{eq:normahatl} and the definition of $\mathcal M_r.$
\end{proof}

\begin{remark}\label{re:esobW}
Since $\mathcal W$ is a closed subspace of
$W^{1,p_1}(\Omega_1)\times W^{1,p_2}(\Omega_2)$, Remark~\ref{re:esobW} yields the compact embeddings
$\mathcal W\hookrightarrow L^{\theta_1}(\Omega_1)\times L^{\theta_2}(\Omega_2)$ and
$\mathcal W\hookrightarrow L^{\theta_1}(\Gamma_1)\times L^{\theta_2}(\Gamma_2)$
for $\theta_i<p_i^*$ and $\theta_i<p_{i*}$, respectively.
\end{remark}
In what follows, $C_1, C_2, \dots$ and $C_{1i}, C_{2i}, \dots$ denote positive
constants related to $\Omega$ and to $\Omega_i$, respectively, independent of
the variables under consideration (such as $\widetilde u$, $n$, etc.).

We have the following auxiliary result concerning the coercivity of $\mathcal{J}$.
\begin{lemma}\label{le:coerciv}
Let $r>0.$ Under the assumptions of Theorem~\ref{teorema1}, the functional $\mathcal{J}$ is coercive on $\mathcal{M}_r$; i.e.,
$\mathcal{J}(\widetilde u)\to\infty$ as $\|\widetilde u\|\to\infty$ for $\widetilde u\in\mathcal{M}_r.$
\end{lemma}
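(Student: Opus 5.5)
We split $\mathcal J=K_{p_1p_2}+k_{\zeta\eta}$, bound $K_{p_1p_2}$ from below by $(H_{G2})$ and the indefinite potential term $k_{\zeta\eta}$ by interpolation, and then use the equivalent norm of Lemma~\ref{le:normatransmission} on $\mathcal M_r$.

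\emph{Step 1: the gradient term.} Fix $j\in\mathcal I_{\alpha\beta}$ and work with the norm $\|\cdot\|_{j\alpha\beta}$. By \eqref{eq:lalbe}, on $\mathcal M_r$ we have
\[
\|\widetilde u\|_{j\alpha\beta}\le \sum_{i=1}^2\|\nabla u_i\|_{ip_i}+(q_jr)^{1/q_j}.
\]
Hence $\|\widetilde u\|\to\infty$ on $\mathcal M_r$ forces $\sum_i\|\nabla u_i\|_{ip_i}\to\infty$, and conversely $\|\widetilde u\|\le C\big(1+\sum_i\|\nabla u_i\|_{ip_i}\big)$ on $\mathcal M_r$. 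Estimate \eqref{eq:mgK} gives
\[
K_{p_1p_2}(\widetilde u)\ge C_1\sum_i\|\nabla u_i\|_{ip_i}^{p_i}-C_2 .
\]
Set $t:=\sum_i\|\nabla u_i\|_{ip_i}$. Since $p_i>1$, we get $\sum_i\|\nabla u_i\|_{ip_i}^{p_i}\ge c\,t^{p_{\min}}-C$.

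\emph{Step 2: the potential term.} Terms of $k_{\zeta\eta}$ with $\rho_i\ge0$, $\gamma_i\ge0$ are nonnegative and can be dropped. By $(H_{\rho\gamma})$, any possibly negative term has $\zeta_i<p_{\min}$ (respectively $\eta_i<p_{\min}$). For such a term, the compact embedding (Remark~\ref{re:esob}, with $\zeta_i<p_{\min}\le p_i<p_i^*$ and $\eta_i<p_i\le p_{i*}$) gives
\[
\Big|\int_{\Omega_i}\rho_i|u_i|^{\zeta_i}\Big|\le C\|u_i\|_i^{\zeta_i}\le C\|\widetilde u\|^{\zeta_i}\le C\big(1+t\big)^{\zeta_i},
\]
and similarly for the boundary terms with exponent $\eta_i$, the last step using the equivalence of norms from Step 1. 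Since $\zeta_i,\eta_i<p_{\min}$, we conclude
\[
\mathcal J(\widetilde u)\ge c\,t^{p_{\min}}-C(1+t)^{\sigma}-C,\qquad \sigma:=\max\{\zeta_i,\eta_i \text{ over the negative terms}\}<p_{\min}.
\]
This tends to $\infty$ as $t\to\infty$, which gives the claim.

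\emph{Main obstacle.} The delicate step is controlling $\|u_i\|_{ip_i}$ (not just the gradient) by the gradient norm on $\mathcal M_r$, in particular when the weights vanish on one subdomain. This is exactly the point of Lemma~\ref{le:normatransmission}: the constraint fixes the seminorm $s_{j\alpha\beta}$, and the transmission condition transfers that control to both subdomains.

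A second subtlety is the case $\zeta_i<p_{\min}$ but $\zeta_i$ possibly larger than $p_i$ for the other index. This is harmless, since we only need $\zeta_i<p_{\min}\le p_i$ for subcritical embedding and growth comparison.
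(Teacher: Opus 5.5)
Your argument is correct and follows essentially the same route as the paper. It uses the lower bound \eqref{eq:mgK} for $K_{p_1p_2}$, and Lemma~\ref{le:normatransmission} together with \eqref{eq:lalbe} to control $\|\widetilde u\|$ on $\mathcal M_r$ by the gradient norms. It then observes that, by $(H_{\rho\gamma})$, only potential terms with exponent below $p_{\min}$ can be negative, and bounds those via the compact embeddings. The only difference is presentational: you argue directly with $t=\sum_i\|\nabla u_i\|_{ip_i}$ and $\sum_i\|\nabla u_i\|_{ip_i}^{p_i}\ge c\,t^{p_{\min}}-C$ rather than by contradiction along a sequence. This handles the mixed terms $\|\nabla u_k\|_{kp_k}^{\zeta_i}$, $k\neq i$, more cleanly than the paper's final step of selecting a single index $l$.
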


\begin{proof}
From estimate~\eqref{eq:mgK} for $K_{p_1p_2}$ we obtain
\begin{equation}\label{eq:Jcoercmr}
\mathcal J(\widetilde u)
\ge C_1\sum_{i=1}^2\|\nabla u_i\|_{ip_i}^{p_i}
+k_{\zeta\eta}(\widetilde u)-C_2\quad\text{for all }\widetilde{u}\in \mathcal W.
\end{equation}
Assume by contradiction that $\mathcal J$ is not coercive on $\mathcal M_r$.
Then there exists a sequence $\big(\widetilde u_n\big)_n\subset\mathcal M_r$ such that
$\|\widetilde u_n\|\to\infty$ while $\big(\mathcal J(\widetilde u_n)\big)_n$ is bounded.

From $(H_{\alpha\beta})$, there exists $i_0\in \mathcal I_{\alpha\beta}.$
Hence, by Lemma~\ref{le:normatransmission} (with $j$ replaced by $i_0$),
the norms $\|\cdot\|$ and $\|\cdot\|_{i_0\alpha\beta}$ are equivalent on $\mathcal W$. 
Moreover, since $\big(\widetilde u_n\big)_n\subset\mathcal M_r$, estimate~\eqref{eq:lalbe} with $j$ replaced by $i_0$
holds for $\widetilde u_n$. 
Therefore, $\|\widetilde u_n\|_{i_0\alpha\beta}\to\infty$ implies that
\begin{equation}\label{eq:div}
\sum_{i=1}^2\|\nabla u_{in}\|_{ip_i}\to\infty\quad \text{as }n\to \infty.
\end{equation}
Next, we derive an estimate for $k_{\zeta\eta}(\widetilde u).$

Fix $i\in\{1, 2\}$. Assume first that  $\zeta_i<p_{\min}$.
Using  Remark~\ref{re:esobW}, we obtain
\[
\Big|\int_{\Omega_i}\rho_i|u_{in}|^{\zeta_i}\,dx\Big|
\le C_{3i}\|\widetilde u_n\|_{\mathcal W}^{\zeta_i}
\le C_{4i}\|\widetilde u_n\|_{i_0\alpha\beta}^{\zeta_i}\quad\text{for all }n\in\mathbb N.
\]
Since $\big(\widetilde u_n\big)_n\subset\mathcal M_r$, estimate~\eqref{eq:lalbe} implies
\[
\|\widetilde u_n\|_{i_0\alpha\beta}
\le \sum_{i=1}^2\|\nabla u_{in}\|_{ip_i} + C_5\quad\text{for all }n\in\mathbb N.
\]
Therefore,  for all $n\in\mathbb N$, we have
\begin{equation}\label{eq:estimaripotentiali}
\Big|\int_{\Omega_i}\rho_i|u_{in}|^{\zeta_i}\,dx\Big|
\le C_{4i}\Big(\sum_{i=1}^2\|\nabla u_{in}\|_{ip_i} + C_5\Big)^{\zeta_i}
\le C_{6i}\Big(1+\sum_{i=1}^2\|\nabla u_{in}\|_{ip_i}^{\zeta_i}\Big).
\end{equation}
If $\zeta_i\ge p_{\min}$, then by $(h_{\rho\gamma})$ we have
$\int_{\Omega_i}\rho_i |u_{in}|^{\zeta_i}\,dx\ge 0~\text{for all }n\in\mathbb N$.

The same estimates hold for the boundary term with $\eta_i$.
Consequently,
\[
k_{\zeta\eta}(\widetilde u_n)
\ge
-C_7\Big(1+\sum_{i=1}^2\big(\|\nabla u_{in}\|_{ip_i}^{\theta_i}+\|\nabla u_{in}\|_{ip_i}^{\mu_i}\big)\Big)\quad\text{for all }n\in\mathbb N,
\]
with $\theta_i,\mu_i\in[0,p_{\min})$ (namely, $\theta_i=\zeta_i$ if $\zeta_i<p_{\min}$ and $\theta_i=0$ otherwise,
and $\mu_i=\eta_i$ if $\eta_i<p_{\min}$ and $\mu_i=0$ otherwise).
Inserting this into \eqref{eq:Jcoercmr}, we derive
\begin{equation}\label{eq:CC}
\mathcal J(\widetilde u_n)
\ge
C_1\sum_{i=1}^2\|\nabla u_{in}\|_{ip_i}^{p_i}
-C_7\Big(1+\sum_{i=1}^2\big(\|\nabla u_{in}\|_{ip_i}^{\theta_i}+\|\nabla u_{in}\|_{ip_i}^{\mu_i}\big)\Big)-C_2.
\end{equation}
By \eqref{eq:div}, there exists $l\in\{1,2\}$ such that, up to a subsequence,
$\|\nabla u_{ln}\|_{lp_l}\to\infty$. Since $\max\{\theta_l,\mu_l\}<p_l$,
the right-hand side in \eqref{eq:CC} tends to infinity, which contradicts the
boundedness of $\big(\mathcal J(\widetilde u_n)\big)_n$.
\end{proof}

A proof of the next result in a general subdifferential setting is given in
\cite[Proposition~2]{PRS}. We include here a proof adapted to our variational
framework.
\begin{lemma}\label{le:in}
Assume that $(H_{G1})$, $(H_{G2})$, and $(H_{G3})$ are fulfilled. 
Then, for any weakly
convergent sequence $(\widetilde u_n)_n\subset\mathcal W$ with weak limit $\widetilde u_*$ for which  
\begin{equation}\label{eq:convK}
\big\langle K_{p_1p_2}'(\widetilde u_n),\,\widetilde u_n-\widetilde u_*\big\rangle \to 0\quad\text{as } n\to \infty,
\end{equation}
we have, $\widetilde u_n\to \widetilde u_*$ in $\mathcal W$.
\end{lemma}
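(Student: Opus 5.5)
The plan is to prove that $K_{p_1p_2}'$ is of type $(S_+)$ on $\mathcal W$. The route is the standard one for convex integrands: convexity gives monotonicity, then Vitali's theorem upgrades pointwise convergence of gradients to strong convergence in $L^{p_i}$. The argument works on each subdomain separately, since $K_{p_1p_2}'$ splits as a sum over $i=1,2$.

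\textbf{Step 1: reduction to gradients.} By Remark~\ref{re:esobW}, $u_{in}\to u_{i*}$ strongly in $L^{p_i}(\Omega_i)$. It therefore suffices to show $\nabla u_{in}\to\nabla u_{i*}$ in $L^{p_i}(\Omega_i)$ for $i=1,2$.

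\textbf{Step 2: monotonicity and the $\limsup$ of $K$.} Since $K_{p_1p_2}'(\widetilde u_*)$ is a fixed element of $\mathcal W^*$ and $\widetilde u_n\rightharpoonup\widetilde u_*$, hypothesis \eqref{eq:convK} gives
\[
\langle K_{p_1p_2}'(\widetilde u_n)-K_{p_1p_2}'(\widetilde u_*),\widetilde u_n-\widetilde u_*\rangle\to0 .
\]
The integrand $\widetilde G_i(x,\cdot)$ is convex, as a composition of the convex nondecreasing $G_i(x,\cdot)$ with the norm $F$. Hence the pointwise quantity
\[
e_{in}:=\big(\nabla_\xi\widetilde G_i(x,\nabla u_{in})-\nabla_\xi\widetilde G_i(x,\nabla u_{i*})\big)\cdot(\nabla u_{in}-\nabla u_{i*})
\]
is nonnegative, so $e_{in}\to0$ in $L^1(\Omega_i)$. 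Convexity also gives $\widetilde G_i(\nabla u_{i*})\ge \widetilde G_i(\nabla u_{in})+\nabla_\xi\widetilde G_i(\nabla u_{in})\cdot(\nabla u_{i*}-\nabla u_{in})$. Integrating and using \eqref{eq:convK} yields $\limsup K_{p_1p_2}(\widetilde u_n)\le K_{p_1p_2}(\widetilde u_*)$. Combined with weak lower semicontinuity, this gives $K_{p_1p_2}(\widetilde u_n)\to K_{p_1p_2}(\widetilde u_*)$.

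\textbf{Step 3: a.e. convergence of gradients (main obstacle).} Pass to a subsequence with $e_{in}\to0$ a.e. Fix such an $x$. If $|\nabla u_{in}(x)|\to\infty$ along a further subsequence, then by \eqref{eq:yGy}, \eqref{eq:HG2nou} and \eqref{eq:estgi},
\[
e_{in}(x)\ge \widetilde G_i(\nabla u_{in})-\overline a_i\big(1+|\nabla u_{in}|^{p_i-1}\big)|\nabla u_{i*}|-\overline a_i\big(1+|\nabla u_{i*}|^{p_i-1}\big)|\nabla u_{in}| ,
\]
which is $\gtrsim |\nabla u_{in}|^{p_i}$ and so tends to infinity, a contradiction. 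Hence $(\nabla u_{in}(x))$ is bounded. Let $\xi$ be any cluster point; by continuity of $\nabla_\xi\widetilde G_i(x,\cdot)$ (from $G_{iy}(x,0)=0$ and $C^1$ regularity), $e_{in}(x)\to0$ gives
\[
\big(\nabla_\xi\widetilde G_i(x,\xi)-\nabla_\xi\widetilde G_i(x,\nabla u_{i*})\big)\cdot(\xi-\nabla u_{i*})=0 .
\]
Strict convexity of $\widetilde G_i(x,\cdot)$ then forces $\xi=\nabla u_{i*}(x)$, so $\nabla u_{in}\to\nabla u_{i*}$ a.e.

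The strict convexity of $\widetilde G_i(x,\cdot)$ is the delicate point: a strictly increasing strictly convex $G_i$ composed with a strictly convex norm must be checked to be strictly convex, including along rays, where strict convexity of $G_i$ is what does the work. This is also the step where I expect the genuine difficulty to lie.

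\textbf{Step 4: Vitali.} By Step 2, $\int_{\Omega_i}\widetilde G_i(x,\nabla u_{in})\,dx\to\int_{\Omega_i}\widetilde G_i(x,\nabla u_{i*})\,dx$ for each $i$: use lower semicontinuity on each term together with the convergence of the sum. These integrands are nonnegative and converge a.e., so Vitali's theorem (equivalently, a Brezis--Lieb/Scheffé argument) shows that $\{\widetilde G_i(x,\nabla u_{in})\}$ is uniformly integrable. By \eqref{eq:HG2nou}, $|\nabla u_{in}|^{p_i}\le C\big(\widetilde G_i(x,\nabla u_{in})+D_i\big)$, so $\{|\nabla u_{in}|^{p_i}\}$ is uniformly integrable as well. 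Vitali's theorem then gives $\nabla u_{in}\to\nabla u_{i*}$ in $L^{p_i}(\Omega_i)$. Since every subsequence has a further subsequence converging to the same limit, the whole sequence converges, and $\widetilde u_n\to\widetilde u_*$ in $\mathcal W$.
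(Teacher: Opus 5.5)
Your proof is correct. Steps 1--3 coincide with the paper's argument: monotonicity gives $\Gamma_{in}\to0$ in $L^1(\Omega_i)$, the coercive lower bound gives pointwise boundedness of $\nabla u_{in}(x)$, and strict monotonicity identifies the cluster points. You genuinely differ in how you obtain uniform integrability and in the final step.

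The paper never uses the energy itself. It expands $\Gamma_{in}$ to get the pointwise bound \eqref{eq:ui}, which dominates $|\nabla u_{in}|^{p_i}$ by an $L^1$ function, by $\Gamma_{in}$, and by two cross terms. These cross terms are controlled on small sets by H\"older's inequality and the boundedness of $\|\nabla u_{in}\|_{ip_i}$. Vitali then yields only $\|\nabla u_{in}\|_{ip_i}\to\|\nabla u_{i*}\|_{ip_i}$, and uniform convexity of $L^{p_i}$ closes the argument.

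You instead exploit the potential structure:
\begin{itemize}
\item the subgradient inequality and weak lower semicontinuity give convergence of each energy $\int_{\Omega_i}\widetilde G_i(x,\nabla u_{in})\,dx$;
\item Scheff\'e's lemma converts this into $L^1$ convergence, hence uniform integrability, of the integrands;
\item $(H_{G2})$ transfers the uniform integrability to $|\nabla u_{in}|^{p_i}$;
\item Vitali applied to $|\nabla u_{in}-\nabla u_{i*}|^{p_i}$ gives strong convergence directly, with no appeal to uniform convexity.
\end{itemize}
Your route is shorter and avoids the H\"older bookkeeping, but it needs $K_{p_1p_2}'$ to be the derivative of a convex functional. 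The paper's argument uses only monotonicity, the growth bound \eqref{eq:1nou} and the coercivity \eqref{eq:3n} of the map $\xi\mapsto\nabla_\xi\widetilde G_i(x,\xi)$. It is therefore the standard $(S_+)$ argument, which also covers non-potential operators of Leray--Lions type.

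You flag the strict convexity of $\widetilde G_i(x,\cdot)$ as the delicate point; the paper simply asserts it, and it does follow as you suggest. Take $z\neq w$ and $t\in(0,1)$.
\begin{itemize}
\item If $z,w$ are not positively collinear, strict convexity of the norm gives $F(tz+(1-t)w)<tF(z)+(1-t)F(w)$. Moreover $G_i(x,\cdot)$ is strictly increasing, since $G_{iy}(x,0)=0$ and $G_{iy}(x,\cdot)$ is strictly increasing.
\item If $z,w$ lie on a common ray, then $F(z)\neq F(w)$, and strict convexity of $G_i(x,\cdot)$ gives the strict inequality.
\end{itemize}
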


\begin{proof}
Let $(\widetilde u_n)_n\subset\mathcal W$ be such that $\widetilde u_n \rightharpoonup \widetilde u_*$  in $\mathcal W$ and \eqref{eq:convK} holds. 
In particular, we have
$\big\langle K_{p_1p_2}'(\widetilde{u}_n)-K_{p_1p_2}'(\widetilde{u}_*), \widetilde{u}_n-\widetilde{u}_*\big\rangle \to 0.
$
Equivalently, 
\begin{equation}\label{eq:1n}
\sum_{i=1}^2\int_{\Omega_i}
\big(\nabla_\xi  \widetilde{G}_i(x, \nabla u_{in})-\nabla_\xi  \widetilde{G}_i(x,\nabla u_{i*})\big)\cdot\nabla(u_{in}-u_{i*})\,dx \to 0\quad\text{as } n\to \infty,
\end{equation}
where $\widetilde{u}_n=(u_{1n},u_{2n})$ and $\widetilde{u}_*=(u_{1*},u_{2*})$. 

\noindent
To prove the strong convergence $\widetilde u_n\to \widetilde u_*$ in $\mathcal W$, it suffices to verify 
that every such sequence admits a strongly convergent subsequence converging to  $\widetilde u_*$.

Fix $i\in\{1, 2\}$. From Remark~\ref{re:esobW}
we already know that, up to a subsequence, 
$
u_{in}\to u_{i*} 
$ in $L^{p_i}(\Omega_i).$
Thus, in order to complete the proof it remains to verify that
\begin{equation}\label{eq:convgrad}
\nabla u_{in}\to \nabla u_{i*}\quad \text{in } L^{p_i}(\Omega_i).
\end{equation}
To this end, we apply the Vitali Convergence Theorem (see, e.g., \cite[Ex.~15, p.~187]{Fo}).

We first establish that $\nabla u_{in}\to \nabla u_{i*}$ a.e. on $\Omega_i$.  For $x \in \Omega_i$,  we set
\begin{equation}\label{eq:0n}
\Gamma_{in}(x):= \big(\nabla_\xi  \widetilde{G}_i(x, \nabla u_{in}(x))-\nabla_\xi  \widetilde{G}_i(x,\nabla u_{i*}(x))\big)\cdot\nabla (u_{in}(x)-u_{i*}(x)).
\end{equation}
From  $(H_{G1})$, the mapping $\xi \to \widetilde{G}_i (x, \xi)$  is strictly convex on $\mathbb{R}^N$ for a.e. $x\in \Omega_i$, which implies that
$\Gamma_{in}(x)\ge 0$ a.e. on $\Omega_i$. Hence, from \eqref{eq:1n} we obtain that
$\Gamma_{in}\to 0$ in $L^1(\Omega_i)$.

Consequently,  up to a subsequence, $\Gamma_{in}(x)\to 0$ a.e. on $\Omega_i.$ 
Let $\widehat{\Omega}_i$  be a measurable subset of $\Omega_i$ such that
$\operatorname{meas} (\Omega_i\setminus\widehat{\Omega}_i)=0$,
the  convergence $\Gamma_{in}(x)\to 0$ holds on $\widehat{\Omega}_i$
and $\nabla u_{i*}(x)$ is finite everywhere on $\widehat{\Omega}_i$.

On the one hand, by \eqref{eq:HG2nou} and \eqref{eq:equiv},
we obtain
\begin{equation}\label{eq:3n}
\nabla_\xi \widetilde{G}_i(x,\xi)\cdot \xi  \ge  \widetilde{G}_i(x,\xi)  \ge m d_i |\xi|^{p_i} -D_i\quad\text{for a.e. }x\in\Omega_i\quad \text{and all }\xi\in\mathbb{R}^N.
\end{equation}
On the other hand, from \eqref{eq:estgi}, we have
\begin{equation}\label{eq:1nou}
|\nabla_\xi \widetilde{G}_i(x, \xi)| \le \overline{a}_i(x)\big(1 + |\xi|^{p_i-1}\big)\quad\text{for a.e. }x\in\Omega_i\quad \text{and all }\xi\in\mathbb{R}^N.
\end{equation}
Hence, by \eqref{eq:3n} and \eqref{eq:1nou}, for $x\in\widehat{\Omega}_i$, we have
\begin{equation*}
\begin{split}
\Gamma_{in}(x) \ge m d_i |\nabla u_{in}(x)|^{p_i} - M_{1i}(x) |\nabla u_{in}(x)|- M_{2i}(x) |\nabla u_{in}(x)|^{p_i-1}-M_{3i}(x), 
\end{split}
\end{equation*}
where the functions $M_{1i}, M_{2i}$, and $M_{3i}$ may also be assumed to
be finite on $\widehat{\Omega}_i.$

This implies that $\big(\nabla u_{in}(x)\big)_n$ is bounded in $\mathbb{R}^N$, for each $x\in\widehat\Omega_i$.

Let $x\in\widehat\Omega_i$ be fixed. There exists a
subsequence of $\big(\nabla u_{in}(x)\big)_n$ which converges to a limit $v_i(x)$.
Then, along the same subsequence,
passing to the limit in $\Gamma_{in}(x)\to 0,$ 
since
$\xi\to \nabla_\xi \widetilde G_i(x,\xi)$ is strictly monotone on $\mathbb{R}^N$, we get $v_i(x)=\nabla u_{i*}(x)$.
Thus, $\nabla u_{in}(x)\to \nabla u_{i*}(x)$ a.e. on $\Omega_i.$ 

Next, we verify that $\big(|\nabla u_{in}|^{p_i}\big)_n$ is uniformly integrable.

\noindent
Expanding the scalar product in the definition of $\Gamma_{in}(x)$ and using \eqref{eq:3n} and \eqref{eq:1nou}, we obtain
\begin{equation}\label{eq:ui}
\begin{aligned}
m d_i|\nabla u_{in}(x)|^{p_i}&\le b_i(x)+ \Gamma_{in}(x) + \overline{a}_i(x)\big(1+|\nabla u_{i*}(x)|^{p_i-1}\big)|\nabla u_{in}(x)|\\
&+\overline{a}_i(x)\big(1+|\nabla u_{in}(x)|^{p_i-1}\big)|\nabla u_{i*}(x)|,
    \end{aligned}
\end{equation}
where $b_i\in L^1(\Omega_i)$. Each of the terms on the right-hand side of the
inequality is uniformly integrable. Indeed, for $b_i$, this is obvious. For the second term this follows from  $\Gamma_{in}\to 0$ in $L^1(\Omega_i)$.  
Finally, for the last two terms we have
\begin{equation*}
\begin{split}
\int_E \overline{a}_i\big(1+|\nabla u_{i*}|^{p_i-1}\big)&|\nabla u_{in}|\,dx \le \|\overline{a}_i\|_{i\infty}\|1+|\nabla u_{i*}|^{p_i-1}\|_{L^{\overline{p}_i}(E)} \|\nabla u_{in}\|_{L^{p_i}(E)}\\
&\le C_1\|1+|\nabla u_{i*}|^{p_i-1}\|_{L^{\overline{p}_i}(E)},\\
\int_E \overline{a}_i\big(1+|\nabla u_{in}|^{p_i-1}\big)&|\nabla u_{i*}|\,dx \le \|\overline{a}_i\|_{i\infty}\|1+|\nabla u_{in}|^{p_i-1}\|_{L^{\overline{p}_i}(E)} \|\nabla u_{i*}\|_{L^{p_i}(E)}\\
&\le C_2\|\nabla u_{i*}\|_{L^{p_i}(E)}\quad\text{for all measurable set } E\subset\Omega_i,
    \end{split}
\end{equation*}
 with $1/p_i+1/\bar{p}_i=1$. Since $1+|\nabla u_{i*}|^{p_i-1}\in L^{\bar{p}_i}(\Omega_i)$ and $\nabla u_{i*}\in L^{p_i}(\Omega_i)$,
these last two estimates ensure the uniformly integrability of the last two terms in \eqref{eq:ui}. Hence $\big(|\nabla u_{in}|^{p_i}\big)_n$ is uniformly integrable on $\Omega_i$. 

As $\Omega_i$ has finite measure, combining  $\nabla u_{in}(x)\to \nabla u_{i*}(x)$ a.e. on $\Omega_i$
with the uniform integrability of the sequence $\big(|\nabla u_{in}|^{p_i}\big)_n$,
Vitali's Convergence Theorem implies $\|\nabla u_{in}\|_{ip_i}\to \|\nabla u_{i*}\|_{ip_i}.$
Since $\nabla u_{in}\rightharpoonup \nabla u_{i*}$ in $L^{p_i}(\Omega_i;\mathbb R^N)$ and
$L^{p_i}(\Omega_i;\mathbb R^N)$ is uniformly convex, it follows that
$\nabla u_{in}\to \nabla u_{i*}$ strongly in $L^{p_i}(\Omega_i;\mathbb R^N)$, hence \eqref{eq:convgrad}.
This concludes the proof.
\end{proof}

Next, from \eqref{eq:1.def}, $\lambda$ is an eigenvalue of
problem~\eqref{eq:1.0}--\eqref{eq:1.00} if and only if there exists
$\widetilde u_\lambda\in \mathcal W\setminus\{(0,0)\}$ such that
\begin{equation}\label{eq:1.defech}
\mathcal J'(\widetilde u_\lambda)
= \lambda \mathcal H'_{q_1q_2}(\widetilde u_\lambda).
\end{equation}
Since for all $\widetilde{u}\in\mathcal{M}_r$ one has 
$\langle \mathcal{H}_{q_1q_2}^\prime (\widetilde{u}), \widetilde{u}\rangle \neq 0,$
it follows that $r$ is a regular value of the $C^1$-functional $\mathcal{H}_{q_1q_2}$. 
Therefore, $\mathcal{M}_r=\mathcal{H}_{q_1q_2}^{-1}(r)$ is a $C^1$-symmetric manifold of codimension $1$ in $\mathcal{W}$ 
(see, e.g., \cite[Theorem~2.2.7]{PK}), with tangent space at 
$\widetilde{u}\in\mathcal{M}_r$ given by 
\[
T_{\widetilde{u}}\mathcal{M}_r=\ker \mathcal{H}_{q_1q_2}^\prime(\widetilde{u}).
\]
We denote by $\mathcal{J}_{\mathcal{M}_r}$ the restriction of $\mathcal{J}$ to $\mathcal{M}_r$, 
and by $\mathcal{J}^\prime_{\mathcal{M}_r}(\widetilde{u})$ the differential of $\mathcal{J}$ 
at $\widetilde{u}\in \mathcal{M}_r$ with respect to $\mathcal{M}_r$, i.e., the restriction of 
$\mathcal{J}^\prime(\widetilde{u})$ to $T_{\widetilde{u}}\mathcal{M}_r$.

An explicit expression for $\mathcal{J}^\prime_{\mathcal{M}_r}$ is given by
\begin{equation}\label{pr:TJ}
\mathcal{J}_{\mathcal{M}_r}^\prime(\widetilde{u})
=\mathcal{J}^\prime(\widetilde{u})
-\lambda(\widetilde{u}) \mathcal{H}_{q_1q_2}^\prime(\widetilde{u}),~
\lambda(\widetilde{u})
=\frac{\langle \mathcal{J}^\prime(\widetilde{u}), \widetilde{u}\rangle}
{\langle \mathcal{H}_{q_1q_2}^\prime(\widetilde{u}), \widetilde{u}\rangle} ~\text{for all } \widetilde{u}\in\mathcal{M}_r,
\end{equation}
(see \cite[Remark~3]{BBM}). 

As a direct consequence of Lemma~\ref{le:in}, we obtain the following result.
\begin{lemma}\label{le:PS}
Let $r>0$. Under the assumptions of Theorem~\ref{teorema1}, the functional $\mathcal{J}$ satisfies the Palais--Smale condition on $\mathcal{M}_r$.
\end{lemma}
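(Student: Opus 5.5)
Take a sequence $(\widetilde u_n)_n\subset\mathcal M_r$ with $\mathcal J(\widetilde u_n)\to c$ and $\|\mathcal J'_{\mathcal M_r}(\widetilde u_n)\|_{(T_{\widetilde u_n}\mathcal M_r)^*}\to0$. The goal is to extract a subsequence with a weak limit $\widetilde u_*$ that satisfies \eqref{eq:convK}, and then to conclude by Lemma~\ref{le:in}.

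\textbf{Step 1 (boundedness and a weak limit).} Since $(\mathcal J(\widetilde u_n))_n$ is bounded, Lemma~\ref{le:coerciv} shows that $(\widetilde u_n)_n$ is bounded in $\mathcal W$. Because $\mathcal W$ is reflexive, we may assume $\widetilde u_n\rightharpoonup\widetilde u_*$ in $\mathcal W$. The embeddings of Remark~\ref{re:esobW}, together with $(H_{pq})$ and $(H_{\zeta\eta})$, give strong convergence of the lower-order terms. In the critical case $q_i=p_i^*$ with $\beta_i\equiv0$ the embedding is not compact, and this case needs separate care.

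\textbf{Step 2 (the Lagrange multipliers are bounded).} By \eqref{pr:TJ}, $\mathcal J'_{\mathcal M_r}(\widetilde u_n)=\mathcal J'(\widetilde u_n)-\lambda_n\mathcal H'_{q_1q_2}(\widetilde u_n)$ with $\lambda_n=\lambda(\widetilde u_n)$. The denominator in $\lambda_n$ is
\[
\langle\mathcal H'_{q_1q_2}(\widetilde u_n),\widetilde u_n\rangle=\sum_{i}q_i\cdot\tfrac1{q_i}\Big(\int_{\Omega_i}\alpha_i|u_{in}|^{q_i}\,dx+\int_{\Gamma_i}\beta_i|u_{in}|^{q_i}\,d\sigma\Big)\ge q_{\min}r>0 .
\]
The numerator is bounded, by \eqref{eq:derivataJ}, \eqref{eq:estgi} and the boundedness of $(\widetilde u_n)_n$. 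Hence $(\lambda_n)_n$ is bounded.

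\textbf{Step 3 (from the tangential derivative to the full derivative).} I must also check that $\|\mathcal J'(\widetilde u_n)-\lambda_n\mathcal H'_{q_1q_2}(\widetilde u_n)\|_{\mathcal W^*}\to0$. This is the usual point that the norm of the tangential derivative controls the norm of this expression on all of $\mathcal W$. Write $v=w+t\widetilde u_n$ with $w\in T_{\widetilde u_n}\mathcal M_r$ and $t=\langle\mathcal H'(\widetilde u_n),v\rangle/\langle\mathcal H'(\widetilde u_n),\widetilde u_n\rangle$. Then $|t|\le C\|v\|$ and $\|w\|\le C\|v\|$ with $C$ uniform in $n$, using Step~2's lower bound on $\langle\mathcal H'(\widetilde u_n),\widetilde u_n\rangle$. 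The $\widetilde u_n$-component vanishes identically by the choice of $\lambda_n$.

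\textbf{Step 4 (conclusion).} Test the expression of Step~3 with $\widetilde u_n-\widetilde u_*$, which is bounded. This gives
\[
\langle K'_{p_1p_2}(\widetilde u_n),\widetilde u_n-\widetilde u_*\rangle=o(1)-\langle k'_{\zeta\eta}(\widetilde u_n),\widetilde u_n-\widetilde u_*\rangle+\lambda_n\langle\mathcal H'_{q_1q_2}(\widetilde u_n),\widetilde u_n-\widetilde u_*\rangle .
\]
For the $k'$ term, use Hölder's inequality: $\big|\int\rho_i|u_{in}|^{\zeta_i-2}u_{in}(u_{in}-u_{i*})\big|\le\|\rho_i\|_\infty\|u_{in}\|_{\zeta_i}^{\zeta_i-1}\|u_{in}-u_{i*}\|_{\zeta_i}\to0$ by compactness, and likewise on $\Gamma_i$. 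The $\mathcal H'$ term is handled the same way, since $(\lambda_n)_n$ is bounded. Thus \eqref{eq:convK} holds, and Lemma~\ref{le:in} yields $\widetilde u_n\to\widetilde u_*$ in $\mathcal W$.

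\textbf{Main obstacle.} The hardest step is the Hölder/compactness argument for the $\mathcal H'$ term when $q_i=p_i^*$ is allowed by $(H_{pq})$, because the embedding is then not compact. I would first try to show that the paper intends $q_i<p_i^*$ there, or that the critical case can be avoided. Failing that, I would try a concentration-compactness argument, but that goes beyond the stated tools. I would flag this case and otherwise proceed with $q_i<p_i^*$.
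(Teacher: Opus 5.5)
Your argument is essentially the paper's proof. Coercivity (Lemma~\ref{le:coerciv}) gives boundedness. The multipliers are bounded because $\langle \mathcal H'_{q_1q_2}(\widetilde u_n),\widetilde u_n\rangle\ge q_{\min}r$. H\"older's inequality plus compact embeddings kill the $k'_{\zeta\eta}$ and $\lambda_n\mathcal H'_{q_1q_2}$ terms, and Lemma~\ref{le:in} concludes.

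Your Step~3 spells out what the paper simply reads off from \eqref{pr:TJ}: a vanishing tangential derivative forces $\mathcal J'(\widetilde u_n)-\lambda_n\mathcal H'_{q_1q_2}(\widetilde u_n)\to0$ in $\mathcal W^*$. For the uniform constant there you also need $\sup_n\|\mathcal H'_{q_1q_2}(\widetilde u_n)\|_{\mathcal W^*}<\infty$. This follows from the boundedness of $(\widetilde u_n)_n$ and the continuous embeddings.

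Your concern about $q_i=p_i^*$ (possible when $p_i<N$ and $\beta_i\equiv0$) is justified, and the paper does not resolve it either. It appeals to Remark~\ref{re:esobW}, which covers only $\theta_i<p_i^*$, both to pass to the limit in the constraint and for the $\mathcal H'_{q_1q_2}$ term. Moreover, no concentration-compactness argument can rescue the statement at all levels, because it is false there. Here is a counterexample:
\begin{itemize}
\item Take $p_1<N$, $q_1=p_1^*$, $\alpha_1\equiv1$, $\beta_1\equiv0$, $\alpha_2\equiv\beta_2\equiv0$, $\rho\equiv\gamma\equiv0$, $F=|\cdot|$ and $G_1(x,y)=y^{p_1}/p_1$.
\item Let $u_{1\varepsilon}$ be a truncated Talenti bubble concentrating at an interior point of $\Omega_1$, multiplied by the constant that places $(u_{1\varepsilon},0)$ in $\mathcal M_r$.
\item Along this family $\mathcal J$ converges, and $\mathcal J'-\lambda_\varepsilon\mathcal H'_{q_1q_2}\to0$ in $\mathcal W^*$ for suitable bounded multipliers $\lambda_\varepsilon$. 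The truncation error is negligible in the dual norm, since the bubble and its gradient are uniformly small away from the concentration point.
\item Yet the family converges weakly to $(0,0)\notin\mathcal M_r$. Since $\mathcal M_r$ is closed, no subsequence can converge strongly.
\end{itemize}
The correct repair is to require $q_i<p_i^*$ in $(H_{pq})$, not to invoke concentration-compactness. Under that hypothesis your proof is complete.
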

\begin{proof}
Let $\big(\widetilde u_n\big)_n\subset\mathcal M_r$
with $\big(\mathcal J(\widetilde u_n)\big)_n$ bounded and
$\mathcal J'_{\mathcal M_r}(\widetilde u_n)\to 0$ in $(T_{\widetilde u_n}\mathcal M_r)^*$.

Since, from Lemma~\ref{le:coerciv}, $ \mathcal J$ is coercive on $\mathcal M_r$, the sequence $\big(\widetilde u_n\big)_n$ is bounded in
$\mathcal W$. Therefore, up to a subsequence,  $\widetilde u_n=(u_{1n}, u_{2n})\rightharpoonup \widetilde{u}_*=(u_{1*},u_{2*})$ in $\mathcal W$. 

By $(H_{\alpha\beta})$, we have $\mathcal I_{\alpha\beta}\neq\emptyset$.
For each $j\in \mathcal I_{\alpha\beta}$, Remark~\ref{re:esobW} implies that
$u_{jn}\to u_{j*}$ in $L^{q_j}(\Omega_j)$ and in $L^{q_j}(\Gamma_j)$.
Passing to the limit in the constraint defining $\mathcal{M}_r$, we infer that
$\widetilde{u}_*\in\mathcal{M}_r$.

Fix $i\in \{1, 2\}.$ As $\big(\widetilde{u}_n\big)_n$ is bounded in $\mathcal{W}$, we derive that 
\begin{equation}\label{eq:bis3.3}
\big(\|\nabla u_{in}\|_{ip_i}\big)_n\quad \text{is bounded}.
\end{equation}
As $\big(\widetilde{u}_n\big)_n$ is bounded in $\mathcal W$,
Remark~\ref{re:esobW} and $(H_{\rho\gamma})$ imply that
\begin{equation}\label{eq:3.3}
\big(\||\rho_i|^{\tfrac1{\zeta_i}} u_{in}\|_{i\zeta_i}\big)_n,~
\big(\||\gamma_i|^{\tfrac1{\eta_i}} u_{in}\|_{\partial i\eta_i}\big)_n
\quad\text{are bounded}.
\end{equation}
On the other hand, since $\mathcal{J}'_{\mathcal{M}_r}(\widetilde u_n)\to 0$
in $(T_{\widetilde u_n}\mathcal M_r)^*$, it follows from \eqref{pr:TJ}
that there exists a sequence $(\lambda_n)_n\subset\mathbb{R}$ such that
\begin{equation}\label{eq:3.diff}
\mathcal{J}'(\widetilde{u}_n)-\lambda_n \mathcal{H}_{q_1q_2}'(\widetilde{u}_n)
=K_{p_1p_2}'(\widetilde{u}_n)+k_{\zeta\eta}'(\widetilde{u}_n)
-\lambda_n \mathcal{H}_{q_1q_2}'(\widetilde{u}_n)\to 0\quad\text{in }\mathcal{W}^*.
\end{equation}
We claim that the sequence $\big(\lambda_n\big)_n$ is bounded. 
Indeed, evaluating \eqref{eq:3.diff} at $\widetilde u_n$, we obtain
\begin{equation}\label{eq:3.4}
\begin{aligned}
\sum_{i=1}^2\Big(
&\int_{\Omega_i} G_{iy}\big(x, F(\nabla u_{in})\big) F(\nabla u_{in})\,dx\\
&+\int_{\Omega_i}\rho_i |u_{in}|^{\zeta_i}\,dx
+\int_{\Gamma_i}\gamma_i |u_{in}|^{\eta_i}\,d\sigma\Big)-\lambda_n\big\langle \mathcal{H}_{q_1q_2}'(\widetilde{u}_n),\widetilde{u}_n\big\rangle
\to 0.
\end{aligned}
\end{equation}
Next, from  $(H_{G3})$ and \eqref{eq:equiv}, we have, a.e. on $\Omega_i$
\begin{equation}
F(\nabla u_{in}(x))G_{iy}(x, F (\nabla u_{in}(x)))\le C_{1i} \big(|\nabla u_{in}(x)| + |\nabla u_{in}(x)|^{p_i}\big).
\end{equation}
 Integrating over $\Omega_i$, and using
 the boundedness in \eqref{eq:bis3.3}, we infer that
\begin{equation}\label{eq:bis3.4}
\Big(\int_{\Omega_i}
G_{iy}\big(x, F(\nabla u_{in})\big)\,
F (\nabla u_{in})\,dx\Big)_n
\quad \text{is bounded}.
\end{equation}
Since $\big(\widetilde{u}_n\big)_n\subset\mathcal{M}_r$, we have
\begin{equation}\label{eq:bis3.5}
q_{\min} r \le
\langle \mathcal{H}_{q_1q_2}'(\widetilde{u}_n),\widetilde{u}_n\rangle
\le q_{\max} r\quad\text{for all }n\in\mathbb N.
\end{equation}
The convergence \eqref{eq:3.4}, together with the boundedness of the terms in \eqref{eq:3.3},  \eqref{eq:bis3.4}, and \eqref{eq:bis3.5} implies that
the sequence $(\lambda_n)_n$ is bounded. 

A similar argument to that used in the proof of \cite[Lemma~6]{BBM},
based on H\"older's inequality, the boundedness of $(\widetilde u_n)_n$
in $\mathcal W$, and Remark~\ref{re:esobW}, yields
\begin{equation}\label{eq:3.5}
\langle k'_{\zeta\eta}(\widetilde u_n),\widetilde u_n-\widetilde u_* \rangle \to 0,
\qquad
\lambda_n\langle j'_{q_1q_2}(\widetilde u_n),\widetilde u_n-\widetilde u_* \rangle \to 0,
\end{equation}
where in the second convergence we also use the boundedness of $(\lambda_n)_n$.

Evaluating \eqref{eq:3.diff} at  $\widetilde{u}_n-\widetilde{u}_*$ and using \eqref{eq:3.5}, we infer that
$
\langle K'_{p_1p_2}(\widetilde{u}_n),\, \widetilde{u}_n-\widetilde{u}_*\rangle \to 0.
$
Therefore, by Lemma~\ref{le:in}, we conclude that
$\widetilde{u}_n\to \widetilde{u}_*$ strongly in $\mathcal{W}$.
This completes the proof.

\end{proof}

\begin{remark}\label{re:finitgenus}
By Lemma~\ref{le:coerciv} and Lemma~\ref{le:PS}, the functional $\mathcal J$ is even
and bounded from below on $\mathcal M_r$, and satisfies the Palais--Smale condition
with respect to $\mathcal M_r$. Hence, standard results in genus theory imply
that every sublevel set of $\mathcal J$,
$
\mathcal J_b:=\{\widetilde u\in\mathcal M_r;\ \mathcal J(\widetilde u)\le b\},
$
has finite Krasnosel'skii genus (see, e.g., \cite[Lemma~9]{BF}).
\end{remark}

As a consequence of Lemmas~\ref{le:ig}, \ref{le:coerciv}, \ref{le:PS},
and Theorem~\ref{te:Sz}, the functional $\mathcal J$ admits infinitely many
distinct pairs of critical points $\pm\widetilde u_n$, $n\ge1$, on $\mathcal M_r$ (hence $\widetilde u_n\neq0$).
Each such critical point corresponds to a Lagrange multiplier $\lambda_n$, and
thus to an eigenpair $(\lambda_n, \pm\widetilde u_n)$ of problem~\eqref{eq:1.0}--\eqref{eq:1.00}.

To complete the proof of Theorem~\ref{teorema1}, it remains to verify that
$\lambda_n\to\infty$ as $n\to\infty$.
Fix $n\ge1$ and define
$
\mathcal J_n:=\{\widetilde u\in\mathcal M_r;~ \mathcal J(\widetilde u)\le n\}.
$
By Remark~\ref{re:finitgenus}, there exists an integer $j_n$ such that
$\gamma(\mathcal J_n)=j_n$.
Moreover, Lemma~\ref{le:ig} ensures the existence of a compact set
$K_n\subset \mathcal M_r\cap\mathcal E$ with $\gamma(K_n)=j_n+1$.
In particular, the set $\Lambda_{j_n+1}$ defined in Theorem~\ref{te:Sz} is nonempty.

By the monotonicity of the genus (see, e.g., \cite[Lemma~1.1]{Ra74}) and the definition of $j_n$, we obtain
$
\sup_A \mathcal J > n ~ \text{for any } A\in\Lambda_{j_n+1},
$
and consequently $c_n\ge n$.

\noindent
Since $\mathcal J$ is bounded from below, we have $c_1>-\infty$, and therefore
$-\infty<c_1\le \cdots \le c_n<\infty$.
Furthermore, by Lemma~\ref{le:PS}, the functional $\mathcal J$ satisfies the
Palais--Smale condition on $\mathcal M_r$, which implies that each $c_n$ is a
critical value of $\mathcal J|_{\mathcal M_r}$ (see \cite[Chapter~II, Section 5]{St}).

Consequently, for every $n\ge 1$ there exist $\lambda_n\in\mathbb R$ and
$\widetilde u_n=(u_{1n},u_{2n})\in\mathcal M_r$ such that
\begin{equation}\label{eq:3.kk}
\mathcal J'(\widetilde u_n)
=\lambda_n  \mathcal{H}_{q_1q_2}'(\widetilde u_n),\quad \mathcal J(\widetilde u_n)=c_n\ge n.
\end{equation}
In particular, \eqref{eq:3.kk} implies that
\[
\lambda_n \geq\frac{\langle \mathcal{J}^\prime (\widetilde{u}_n), \widetilde{u}_n\rangle}{q_{\max} r}\quad\text{for all } n\geq 1,
\]
\begin{equation}\label{eq:3.10}
\mathcal{J} (\widetilde{u}_n)\to \infty \quad\mbox{as } n\rightarrow \infty.
\end{equation}
In order to conclude the proof of Theorem~\ref{teorema1}, we only need to verify that \eqref{eq:3.10} implies
\begin{equation}\label{eq:3.f}
\begin{aligned}
\langle \mathcal J'(\widetilde u_n),\widetilde u_n\rangle
&=\sum_{i=1}^2\Big(
\int_{\Omega_i} G_{iy}\bigl(x,F(\nabla u_{in})\bigr)F(\nabla u_{in})\,dx\\
&\qquad
+\int_{\Omega_i}\rho_i |u_{in}|^{\zeta_i}\,dx
+\int_{\Gamma_i}\gamma_i |u_{in}|^{\eta_i}\,d\sigma
\Big)
\to\infty \quad\text{as } n\to\infty.
\end{aligned}
\end{equation}
As in the proof of Lemma~\ref{le:coerciv}, the inclusion $\big(\widetilde u_n\big)_n\subset \mathcal M_r$, together with
Remark~\ref{re:esobW} and the norm equivalence in Lemma~\ref{le:normatransmission}, implies that, for all $n\ge 1$,
\begin{equation}\label{eq:0potsmall}
\begin{aligned}
\mathcal S_{1n}
&:= \sum_{\zeta_i<p_{\min}}
\frac{1}{\zeta_i}\Big|\int_{\Omega_i}\rho_i|u_{in}|^{\zeta_i}\,dx\Big|
+ \sum_{\eta_i<p_{\min}}
\frac{1}{\eta_i}\Big|\int_{\Gamma_i}\gamma_i|u_{in}|^{\eta_i}\,d\sigma\Big| \\
&\quad\le C_1\Big(1
+\sum_{\zeta_i<p_{\min}}\|\nabla u_{in}\|_{ip_i}^{\zeta_i}
+\sum_{\eta_i<p_{\min}}\|\nabla u_{in}\|_{ip_i}^{\eta_i}\Big).
\end{aligned}
\end{equation}
Fix $i\in\{1,2\}.$ Integrating over $\Omega_i$ in \eqref{eq:HG2nou}  and using \eqref{eq:equiv}, we obtain
\[
\|\nabla u_{in}\|_{ip_i}^{p_i} \le C_{2i}\Big(1+ \int_{\Omega_i} G_i\big(x, F (\nabla u_{in})\big)\,dx\Big),
\]
which implies, for any $\theta_i>0$,
\[
\|\nabla u_{in}\|_{ip_i}^{\theta_i} \le C_{3i}\biggl(1+\Big(\int_{\Omega_i} G_i\big(x,F (\nabla u_{in})\big)\,dx\Big)^{\frac{\theta_i}{p_i}}\biggr)\quad \text{for all }n\ge 1.
\]
Combined with  \eqref{eq:0potsmall}, this estimate  leads to
\begin{equation}\label{eq:potsmall}
\mathcal S_{1n}\le C_{4} \biggl(1+\sum_{i=1}^{2}\Big(\int_{\Omega_i} G_i\bigl(x, F(\nabla u_{in})\bigr)\,dx\Big)^{\frac{\omega_i}{p_i}}\biggr)\quad \text{for all }n\ge 1,
\end{equation}
where $\omega_i:=\max\{\zeta_i,\eta_i\}$ if $\max\{\zeta_i,\eta_i\}<p_{\min}$,
and $\omega_i:=0$ otherwise.

On the other hand, from  \eqref{eq:potsmall},  and the definition of $\mathcal J$, we obtain
\begin{equation}\label{eq:infty}
\begin{aligned}
\mathcal J(\widetilde u_n)
&\le \sum_{i=1}^2\int_{\Omega_i} G_i\bigl(x,F(\nabla u_{in})\bigr)\,dx
+ \mathcal S_{2n}\\
&\quad + C_4\biggl(1
+\sum_{i=1}^{2}\Big(\int_{\Omega_i}
G_i\big(x,F(\nabla u_{in})\big)\,dx\Big)^{\frac{\omega_i}{p_i}}\biggr).
\end{aligned}
\end{equation}
where 
\[
\mathcal S_{2n}:=\sum_{\zeta_i\ge p_{\min}}
\frac{1}{\zeta_i}\int_{\Omega_i}\rho_i|u_{in}|^{\zeta_i}\,dx
+\sum_{\eta_i\ge p_{\min}}\frac{1}{\eta_i} \int_{\Gamma_i}\gamma_i|u_{in}|^{\eta_i}\,d\sigma.
\] 
Since $\mathcal S_{2n}\ge 0$ and $\mathcal J(\widetilde{u}_n)\to \infty$,
by $(H_{\rho\gamma})$ and \eqref{eq:3.10}, respectively,
it follows that the right-hand side of \eqref{eq:infty} tends to $\infty$.
As $\omega_i<p_{\min}$, this yields
\begin{equation}\label{eq:Gdiv}
\sum_{i=1}^2\int_{\Omega_i} G_i\big(x, F(\nabla u_{in})\big)\,dx + \mathcal S_{2n} \to \infty.
\end{equation}
Finally, integrating inequality \eqref{eq:yGy} over
$\Omega_i$, 
with $y=F (\nabla u_{in})$,
we obtain
\[
\sum_{i=1}^2\int_{\Omega_i}
G_{iy}\big (x, F (\nabla u_{in})\big ) F (\nabla u_{in})\,dx
\ge
\sum_{i=1}^2  \int_{\Omega_i}G_i\big(x, F (\nabla u_{in}\big)\big) \,dx.
\]
Together with \eqref{eq:potsmall}, this implies
\[
\begin{split}
\langle \mathcal J'(\widetilde u_n),\widetilde u_n\rangle&\ge \mathcal S_{2n}
+\sum_{i=1}^2 \int_{\Omega_i}G_i\big(x, F (\nabla u_{in})\big)\,dx\\
&\quad-
C_{5}\biggl(1+\sum_{i=1}^2\Big(\int_{\Omega_i}G_i\big(x,F (\nabla u_{in})\big)\,dx\Big)^{\frac{\omega_i}{p_i}}\biggr).
\end{split}
\]
Consequently, from \eqref{eq:Gdiv} and $\omega_i<p_{\min}$, we obtain $\langle \mathcal J'(\widetilde u_n),\widetilde u_n\rangle \to \infty.$
This completes the proof of Theorem~\ref{teorema1}.

\begin{remark}\label{re:normanondeg}

Assume that $\mathcal I_{\alpha\beta}=\{1,2\}$.
Then 
\begin{equation}\label{eq:p.norme}
\|\widetilde u\|_{\alpha\beta}
:=
\sum_{i=1}^2\bigl(\|\nabla u_i\|_{ip_i}+s_{i\alpha\beta}(u_i)\bigr),
\quad \widetilde u=(u_1,u_2)\in\mathcal W,
\end{equation}
defines a norm on $\mathcal W$ which is equivalent to the standard norm
introduced in~\eqref{eq:1.nw}.
Here $s_{i\alpha\beta}$ is defined as in~\eqref{eq:seminorme} with $j$ replaced by $i$.
Moreover, for all $\widetilde u\in\mathcal M_r$,
\begin{equation}\label{eq:talbe}
\|\widetilde u\|_{\alpha\beta}\le\sum_{i=1}^2\|\nabla u_i\|_{ip_i}+\sum_{i=1}^2 (q_i r)^{1/q_i}.
\end{equation}
Similarly
$\|u\|_{i\alpha\beta}:=\|\nabla u\|_{ip_i}+s_{i\alpha\beta}(u),
~ u\in W^{1,p_i}(\Omega_i),
$
defines a norm on $W^{1,p_i}(\Omega_i)$ equivalent to the standard one, and
$
\|u_i\|_{i\alpha\beta}
\le
\|\nabla u_i\|_{ip_i}
+(q_i r)^{1/q_i}$
for all $\widetilde u=(u_1,u_2)\in\mathcal M_r.$
Working with these equivalent norms, the conclusion of
Theorem~\ref{teorema1} remains valid if assumption
$(H_{\rho\gamma})$ is replaced by the weaker condition
\begin{itemize}
\item[$(H'_{\rho\gamma})$]
$\rho_i \in L^{\infty}(\Omega_i)$ and $\gamma_i \in L^{\infty}(\Gamma_i)$ for $i=1,2$.
Moreover, if $\zeta_i \ge p_i$, we assume $\rho_i \ge 0$ a.e.\ in $\Omega_i$,
and if $\eta_i \ge p_i$, we assume $\gamma_i \ge 0$ a.e.\ on $\Gamma_i$.
\end{itemize}
\end{remark}

\section{ Proof of Theorem~\ref{teorema2}}\label{sec:teorema2}

Under the assumptions of Theorem~\ref{teorema2}, choosing $\widetilde v=\widetilde{u}_\lambda$ in \eqref{eq:1.def} implies that any eigenvalue $\lambda$ of problem~\eqref{eq:1.0}--\eqref{eq:1.00} is nonnegative.
 
Clearly, $\lambda=0$ is not an eigenvalue under assumption $(h_{\rho\gamma})$.
Hence, all eigenvalues lie in $(0,\infty)$.

For $\lambda > 0$, define the functional
\begin{equation}\label{Jl}
\mathcal{J}_\lambda:\mathcal{W} \to \mathbb{R},~\mathcal{J}_\lambda(\widetilde{u}):= K_{p_1p_2}(\widetilde{u}) + k_{p_1p_2}(\widetilde{u})-\lambda \mathcal{H}_{q_1q_2}(\widetilde{u}).
\end{equation}
According to  \eqref{eq:1.def}, $\lambda>0$ is an eigenvalue of problem \eqref{eq:1.0}--\eqref{eq:1.00} if and only if there exists a critical point $\widetilde u_\lambda\in \mathcal W\setminus\{0\}$ of $\mathcal{J}_\lambda$, i.e. $\mathcal{J}_\lambda^\prime(\widetilde u_\lambda)=0.$

\smallskip
\textbf{Case $(a)\quad$} The proof relies on the Palais--Smale condition for $\mathcal J_\lambda$ on $\mathcal W$.
\begin{lemma}
Let $\lambda>0.$ Under the assumptions of Theorem~\ref{teorema2}, Case~$(a)$, 
the functional $\mathcal{J}_\lambda$ satisfies the Palais--Smale condition on
$\mathcal W$.
\end{lemma}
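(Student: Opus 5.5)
We follow the standard Ambrosetti--Rabinowitz scheme: first show that a Palais--Smale sequence is bounded, then obtain strong convergence from Lemma~\ref{le:in}. Let $(\widetilde u_n)_n\subset\mathcal W$ satisfy $\mathcal J_\lambda(\widetilde u_n)\to c$ and $\mathcal J_\lambda'(\widetilde u_n)\to0$ in $\mathcal W^*$.

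\textbf{Boundedness.} Since $\zeta_i=\eta_i=p_i$, we compare $\mathcal J_\lambda(\widetilde u_n)$ with $\frac1{q_{\min}}\langle\mathcal J_\lambda'(\widetilde u_n),\widetilde u_n\rangle$.

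For the $\lambda$-terms, write
\[
\mathcal H_{q_1q_2}(\widetilde u)=\sum_{i\in\mathcal I_{\alpha\beta}}\tfrac1{q_i}Q_i(u_i),\qquad Q_i(u_i)\ge0,\qquad \langle \mathcal H'_{q_1q_2}(\widetilde u),\widetilde u\rangle=\sum_{i\in\mathcal I_{\alpha\beta}} Q_i(u_i).
\]
Because $\frac1{q_i}\le\frac1{q_{\min}}$, the combination $-\lambda\mathcal H+\frac{\lambda}{q_{\min}}\langle\mathcal H',\widetilde u\rangle$ is nonnegative.

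For the operator part, use $(h_{G4})$ in the form $G_i(x,y)-\frac1{q_{\min}}yG_{iy}(x,y)\ge(1-\frac{p_i}{q_{\min}})G_i-\frac{c_i}{q_{\min}}y^{\delta_i}$. Together with $(H_{G2})$ for $y_{0i}=0$, this gives
\[
\int_{\Omega_i}\Big(G_i-\tfrac{1}{q_{\min}}F G_{iy}\Big)\ge \Big(1-\tfrac{p_i}{q_{\min}}\Big)d_i m^{p_i}\|\nabla u_{in}\|_{ip_i}^{p_i}-C\|\nabla u_{in}\|_{ip_i}^{\delta_i}.
\]
The potential terms contribute $(\frac1{p_i}-\frac1{q_{\min}})\big(\int_{\Omega_i}\rho_i|u_{in}|^{p_i}\,dx+\int_{\Gamma_i}\gamma_i|u_{in}|^{p_i}\,d\sigma\big)\ge0$, using $p_{\max}<q_{\min}$ and $(h_{\rho\gamma})$.

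Combining these, we obtain
\[
c+1+o(1)\|\widetilde u_n\|\ge \varepsilon\sum_i\Big(\|\nabla u_{in}\|_{ip_i}^{p_i}+\int_{\Omega_i}\rho_i|u_{in}|^{p_i}+\int_{\Gamma_i}\gamma_i|u_{in}|^{p_i}\Big)-C\sum_i\|\nabla u_{in}\|^{\delta_i}_{ip_i}.
\]
We then need a norm on $\mathcal W$ controlled by the right-hand side. The analogue of Lemma~\ref{le:normatransmission}, with $(\rho_l,\gamma_l)$ in place of $(\alpha_j,\beta_j)$ and $l$ the index from $(h_{\rho\gamma})$, shows that
\[
\sum_i\|\nabla u_i\|_{ip_i}+\Big(\int_{\Omega_l}\rho_l|u_l|^{p_l}\,dx+\int_{\Gamma_l}\gamma_l|u_l|^{p_l}\,d\sigma\Big)^{1/p_l}
\]
is an equivalent norm. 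The proof is the same: constancy via the transmission condition, and compact traces, since $p_l<p_l^*$ and $p_l<p_{l*}$.

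Since $\delta_i<p_i$ and all powers on the right exceed $1$, the desired inequality has the form $\sum_i a_{in}^{p_i}\le C(1+o(1)\sum_i a_{in}+\sum_i a_{in}^{\delta_i})$, where $a_{in}$ are the components of the equivalent norm. This forces $(\widetilde u_n)_n$ to be bounded. I expect the main obstacle to be this step: handling the mixed homogeneities $p_1\ne p_2$ and the single-index weight $l$. I would argue by contradiction, normalizing nothing and simply comparing the largest component.

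\textbf{Strong convergence.} Up to a subsequence, $\widetilde u_n\rightharpoonup\widetilde u_*$. By Remark~\ref{re:esobW} and $(H_{pq})$, after checking the endpoint case $q_i=p_i^*$ when $\beta_i\equiv0$, we get
\[
\langle k'_{p_1p_2}(\widetilde u_n),\widetilde u_n-\widetilde u_*\rangle\to0,\qquad \langle \mathcal H'_{q_1q_2}(\widetilde u_n),\widetilde u_n-\widetilde u_*\rangle\to0,
\]
as in \eqref{eq:3.5}. If $q_i=p_i^*$ is allowed, compactness fails, and I would restrict to the subcritical range or handle it as in the paper's citation. Testing $\mathcal J_\lambda'(\widetilde u_n)\to0$ with $\widetilde u_n-\widetilde u_*$ then yields $\langle K'_{p_1p_2}(\widetilde u_n),\widetilde u_n-\widetilde u_*\rangle\to0$. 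Lemma~\ref{le:in} gives $\widetilde u_n\to\widetilde u_*$ in $\mathcal W$.
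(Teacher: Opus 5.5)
Your proposal is essentially the paper's proof. It uses the same Ambrosetti--Rabinowitz bound for $\theta\mathcal J_\lambda(\widetilde u_n)-\langle\mathcal J_\lambda'(\widetilde u_n),\widetilde u_n\rangle$: you take $\theta=q_{\min}$ where the paper takes any $\theta\in(p_{\max},q_{\min})$, and both work because the argument only needs $p_{\max}<\theta\le q_{\min}$. It then uses the equivalent norm $\|\cdot\|_{l\rho\gamma}$ from $(h_{\rho\gamma})$, a largest-component comparison, and Lemma~\ref{le:in}.

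There are two small points to tighten. First, absorb each term $C a_{in}^{\delta_i}$ into $a_{in}^{p_i}$ by Young's inequality before comparing the largest component, as the paper does pointwise with $\varepsilon_0$. Otherwise the naive bound $M^{p_{\min}}\lesssim 1+o(1)M+M^{\max_i\delta_i}$ gives no contradiction when $\max_i\delta_i\ge p_{\min}$. Second, your caveat about $q_i=p_i^*$ is justified. The paper's step $\lambda\langle\mathcal H'_{q_1q_2}(\widetilde u_n),\widetilde u_n-\widetilde u\rangle\to0$ also rests on compactness and does not treat that endpoint, so both arguments effectively require $q_i<p_i^*$.
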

\begin{proof}
Let $(\widetilde u_n)\subset\mathcal W$ be such that 
$
\big(\mathcal J_\lambda(\widetilde u_n)\big)_n$  is bounded in $\mathcal W$
and
$\mathcal J_\lambda'(\widetilde u_n)\to 0$ in $\mathcal W^*.$

We claim that $\big(\widetilde u_n\big)$ is bounded in $\mathcal W$. 

Let
$\theta\in(p_{\max},\,q_{\min})$.
Fix $i\in \{1, 2\}$ and, for each $n\in \mathbb N$, define
\[
\mathcal{K}_{in\theta}(x):=\theta G_i(x, F (\nabla u_{in}(x)))- F (\nabla u_{in}(x))G_{iy}(x, F (\nabla u_{in}(x))),~x\in\Omega_i.
\]
By integrating over $\Omega_i$ and applying $(h_{G4})$, we obtain, for all $n\in \mathbb N$
\begin{equation}\label{eq:estimareA}
\begin{split}
\int_{\Omega_i}\mathcal{K}_{in\theta}\,dx\ge
(\theta-p_i)\int_{\Omega_i}G_i(x, F(\nabla u_{in})dx
-
c_i\int_{\Omega_i} F(\nabla u_{in})^{\delta_i}dx.
\end{split}
\end{equation}
Fix $\varepsilon_0>0$. By Young's inequality, we have
\[
c_i y^{\delta_i}\le \varepsilon_0 y^{p_i} + C_{i\varepsilon_0}\quad \text{for all }y\ge0,
\]
 for some $C_{i\varepsilon_0}>0$.
Taking $y = F(\nabla u_{in}(x))$ and integrating over $\Omega_i$, we derive
\[
-c_i\int_{\Omega_i}F (\nabla u_{in})^{\delta_i}\,dx
\ge -\varepsilon_0\int_{\Omega_i}F (\nabla u_{in})^{p_i}\,dx - c_i C_{i\varepsilon_0}\operatorname{meas}(\Omega_i).
\]
Together with \eqref{eq:estimareA}, the last estimate leads to
\begin{equation}\label{eq:0A}
\begin{split}
\int_{\Omega_i}\mathcal{K}_{in\theta}\,dx &\ge (\theta-p_i)\int_{\Omega_i}G_i(x, F(\nabla u_{in})\,dx
-\varepsilon_0\int_{\Omega_i}F (\nabla u_{in})^{p_i}\,dx - C_{1i}.
 \end{split}
\end{equation}
From \eqref{eq:0A}, using \eqref{eq:HG2nou} and \eqref{eq:equiv}, we obtain
\[
\int_{\Omega_i}\mathcal{K}_{in\theta}\,dx
\ge \big(d_i(\theta-p_i)-\varepsilon_0\big)
\int_{\Omega_i}F(\nabla u_{in})^{p_i}\,dx - C_{1i}.
\]
Choosing $\varepsilon_0>0$ such that
$\varepsilon_0<\min\{d_1(\theta-p_1),d_2(\theta-p_2)\}$,
we derive
\[
\int_{\Omega_i}\mathcal{K}_{in\theta}\,dx
\ge C_{2i}\|\nabla u_{in}\|_{ip_i}^{p_i}-C_{1i},
\]
with $C_{2i}=m\big(d_i(\theta-p_i)-\varepsilon_0\big)>0$. Hence,
\begin{equation}\label{eq:mp1}
\theta K_{p_1p_2}(\widetilde u_n)-\langle K_{p_1p_2}'(\widetilde u_n),\widetilde u_n\rangle=\sum_{i=1}^{2}\int_{\Omega_i}\mathcal{K}_{in\theta}\,dx \ge
C_3\sum_{i=1}^{2}\|\nabla u_{in}\|_{ip_i}^{p_i}
 - C_{4}.
\end{equation}
Let $l\in\{1,2\}$ be as in $(h_{\rho\gamma})$. For $\widetilde u\in\mathcal W$, set
\[
s_{l\rho\gamma}(u_l):=\left(\int_{\Omega_l}\rho_l | u_l|^{p_l}\,dx+\int_{\Gamma_l}\gamma_l |u_l|^{p_l}\,d\sigma\right)^{\frac{1}{p_l}}.\]
By Lemma~\ref{le:normatransmission}
(with $j$ replaced by $l$, $\alpha_{j}$ by $\rho_l$,
and $\beta_{j}$ by $\gamma_l$), 
\begin{equation}\label{eq:defnorma}
\|\widetilde u\|_{l\rho\gamma}
:=\sum_{i=1}^2\| \nabla u_i \|_{ip_i}+s_{l\rho\gamma}(u_l),~\widetilde u\in \mathcal W,
\end{equation}
defines a norm on $\mathcal W$ equivalent to the norm defined in \eqref{eq:1.nw}.

\noindent
On the other hand, since $\theta\in(p_{\max},q_{\min})$, it follows from $(h_{\rho\gamma})$ and $(H_{\alpha\beta})$ that
\begin{equation}\label{eq:mp1kj}
\begin{aligned}
\theta k_{p_1p_2}(\widetilde u_n)
-\langle k_{p_1p_2}'(\widetilde u_n),\widetilde u_n\rangle
&=\sum_{i=1}^2\Big(\frac{\theta}{p_i}-1\Big)
\Big(\|\rho_i^{1/p_i}u_{in}\|_{ip_i}^{p_i}
+\|\gamma_i^{1/p_i}u_{in}\|_{\partial i\,p_i}^{p_i}\Big)\\
&\ge \Big(\frac{\theta}{p_l}-1\Big)s_{l\rho\gamma}(u_{ln})^{p_l}\ge0,\\[4pt]
\theta \mathcal H_{q_1q_2}(\widetilde u_n)
-\langle \mathcal H'_{q_1q_2}(\widetilde u_n),\widetilde u_n\rangle
&=\sum_{i\in\mathcal I_{\alpha\beta}}
\Big(\frac{\theta}{q_i}-1\Big)\\
&\qquad\cdot
\Big(\|\alpha_i^{1/q_i}u_{in}\|_{iq_i}^{q_i}
+\|\beta_i^{1/q_i}u_{in}\|_{\partial iq_i}^{q_i}\Big)<0 .
\end{aligned}
\end{equation}
Since $\big(\mathcal J_\lambda(\widetilde u_n)\big)_n$ is bounded in $\mathcal W$  and $\mathcal J_\lambda'(\widetilde u_n)\to 0$ in $\mathcal W^*$,
there exists $N_0\ge 0$ such that 
\begin{equation}\label{eq:mp002}
\parallel\mathcal J_\lambda'(\widetilde u_n)\parallel_{\mathcal W^*}\le 1~\text{and}~\mid\mathcal J_\lambda(\widetilde u_n)\mid\le C_5~\text{for all }n\ge N_0.
\end{equation}
Therefore, 
\begin{equation}\label{eq:mp02}
\big|\langle \mathcal J_\lambda'(\widetilde u_n),\widetilde u_n\rangle\big|
\le \|\mathcal J_\lambda'(\widetilde u_n)\|_{\mathcal W^*} \|\widetilde u_n\|
\le C_6\|\widetilde u_n\|_{l\rho\gamma}~\text{for all }n\ge N_0.
\end{equation}

From \eqref{eq:mp1}, \eqref{eq:mp1kj}, \eqref{eq:mp002}, and \eqref{eq:mp02},  for all $n\ge N_0$, it follows that
\begin{equation}\label{eq:mp3}
\begin{split}
 \theta C_5+ C_6\|\widetilde u_n\|_{l\rho\gamma}
\ge \theta\mathcal J_\lambda(\widetilde u_n)-\langle \mathcal J_\lambda'(\widetilde u_n),\widetilde u_n\rangle
\ge C_7\Big(\sum_{i=1}^{2}\|\nabla u_{in}\|_{ip_i}^{p_i} + s_{l\rho\gamma}(u_{ln})^{p_l}\Big),
\end{split}
\end{equation}
where $C_7=\min\{C_3, \theta/p_l-1\}>0$.
 Denote 
\begin{equation}\label{eq:defTM}
\begin{aligned}
T(\widetilde u)
&:=\sum_{i=1}^{2}\|\nabla u_i\|_{ip_i}^{p_i}
   + s_{l\rho\gamma}(u_l)^{p_l},\\
M(\widetilde u)
&:=\max\{\|\nabla u_1\|_{1p_1},\,\|\nabla u_2\|_{2p_2},\,
        s_{l\rho\gamma}(u_l)\},\qquad
\widetilde u=(u_1,u_2)\in\mathcal W .
\end{aligned}
\end{equation}
Assume by contradiction that
$
\|\widetilde u_n\|_{l\rho\gamma}\to\infty
~ \text{as } n\to\infty$.
In particular, $T(\widetilde u_n)\to\infty$, and consequently
$M(\widetilde u_n)\to\infty$.
Hence, there exists $N_1\ge N_0$ such that
\begin{equation}\label{eq:0estimareMn}
M(\widetilde u_n)\ge \frac{1}{3}\|\widetilde u_n\|_{l\rho\gamma}\ge 1
~ \text{for all } n\ge N_1.
\end{equation}
Moreover,
\begin{equation}\label{eq:0estimareTn}
T(\widetilde u_n) \ge M(\widetilde u_n)^{p_{\min}}
~ \text{for all } n\ge N_1.
\end{equation}
From \eqref{eq:mp3}, \eqref{eq:0estimareMn}, and \eqref{eq:0estimareTn} we further infer that
\begin{equation}
\begin{split}
\theta C_5&+ C_6\|\widetilde u_n\|_{l\rho\gamma}
\ge  C_7 T(\widetilde{u}_n)\ge C_7 M(\widetilde{u}_n)^{p_{\min}}\ge \frac{C_7}{3^{p_{\min}}}\|\widetilde u_n\|_{l\rho\gamma}^{p_{\min}}
\end{split}
\end{equation}
for all $n\ge N_1.$ Since $p_{\min}>1$, the right-hand side grows faster than $\|\widetilde u_n\|_{l\rho\gamma}$ as $n\to\infty$,
which contradicts $\|\widetilde u_n\|_{l\rho\gamma}\to\infty$
 as $n\to\infty.$ Therefore, $\big(\widetilde u_n\big)_n$ is bounded in $\mathcal W$.

Consequently, up to a subsequence, $\widetilde u_n \rightharpoonup \widetilde u$ in $\mathcal W$.
Using a standard argument (see, e.g. \cite[Lemma 6]{BBM}), we have
\[
\langle k_{p_1p_2}',  \widetilde u_n-\widetilde u\rangle \to 0,\quad \lambda \langle \mathcal{H}_{q_1q_2}'(\widetilde u_n), \widetilde u_n-\widetilde u\rangle \to 0\quad\text{as }n\to\infty.
\]
This combined with
\[
\langle\mathcal J_\lambda'(\widetilde u_n), \widetilde u_n-\widetilde u\rangle
=
\langle K_{p_1p_2}'(\widetilde u_n), \widetilde u_n-\widetilde u\rangle
+
\langle (k_{p_1p_2}'-\lambda \mathcal{H}_{q_1q_2}')(\widetilde u_n), \widetilde u_n-\widetilde u\rangle
\to 0 ~ \text{in } \mathcal W^*,
\]
yields $\langle K_{p_1p_2}'(\widetilde u_n), \widetilde u_n-\widetilde u\rangle \to 0.
$
Therefore, by Lemma~\ref{le:in}, we conclude that
$
\widetilde u_n \to \widetilde u ~\text{in } \mathcal W.
$
Hence, $\mathcal J_\lambda$ satisfies the Palais--Smale condition on $\mathcal W$. 

\end{proof}

Next, by Remark~\ref{re:esobW} and $(H_{pq})$, it follows that
\begin{equation}\label{eq:mp4}
\mathcal{H}_{q_1q_2}(\widetilde u)\le
C_8 \sum_{i\in \mathcal I_{\alpha\beta}} \|\widetilde u\|_{l\rho\gamma}^{q_i}\quad\text{for all }\widetilde u\in \mathcal W.
\end{equation}
On the other hand, for $\|\widetilde u\|_{l\rho\gamma}<1$, we have
\begin{equation}\label{eq:0mp4}
T(\widetilde{u})\ge \sum_{i=1}^2 \|  \nabla u_i \|_{ip_i}^{p_{\max}} +s_{l\rho\gamma}(u_{l})^{p_{\max}}\ge 3^{1-p_{\max}}\|\widetilde u\|_{l\rho\gamma}^{p_{\max}}.
\end{equation}
By $(h_{\rho\gamma})$, \eqref{eq:mgK} with $C_2=0$ (since $(H_{G2})$ holds with $y_{0i}=0$), and \eqref{eq:mp4}-\eqref{eq:0mp4}, we obtain
\begin{equation}\label{eq:mp5}
\mathcal J_\lambda(\widetilde u)
\ge 
C_9 T(\widetilde u)-\lambda \mathcal{H}_{q_1q_2}(\widetilde u)
\ge
C_{10} \|\widetilde u\|_{l\rho\gamma}^{p_{\max}}
-
2\lambda C_8
\|\widetilde u\|_{l\rho\gamma}^{q_{\min}}
\quad
\text{for }~ \|\widetilde u\|_{l\rho\gamma}<1.
\end{equation}
Since $p_{\max}<q_{\min}$, it follows from \eqref{eq:mp5} that there exists $r_\lambda\in(0,1)$ satisfying
\[
m_\lambda
:=
\inf_{\|\widetilde u\|_{l\rho\gamma}=r_\lambda}
{\mathcal J}_\lambda(\widetilde u)
>
{\mathcal J}_\lambda((0, 0))
=0.
\]
Fix $t>1$ and consider the constant function $(t,t)\in\mathcal W$. By $(H_{\alpha\beta})$ we have $\mathcal{H}_{q_1q_2}\big((t, t)\big)>0$. As $q_{\min}>p_{\max}$, we obtain that
\[ 
\mathcal J_\lambda\big((t, t)\big)
=
k_{p_1p_2}\big((t, t)\big)-\lambda \mathcal{H}_{q_1q_2}\big((t, t)\big)
\le
C_{11} t^{p_{\max}}-\lambda C_{12}  t^{q_{\min}}.
\]
 In particular, $\mathcal J_\lambda\big((t, t)\big)\to -\infty
~\text{as }t\to\infty$. Hence, there exists $t_0>1$ such that
$
\mathcal J_\lambda\big((t_0, t_0)\big)<0$ and $\|(t_0, t_0)\|_{l\rho\gamma}>r_\lambda$,
where the latter follows from the assumption $(h_{\rho\gamma})$.

By the Mountain Pass Theorem (see, e.g., \cite[Theorem~2.2]{Ra}),
there exists a critical point $\widetilde u_\lambda\in\mathcal W$
of $\mathcal J_\lambda$ such that
$
\mathcal J_\lambda(\widetilde u_\lambda)\ge r_\lambda>0.
$
Hence, $\widetilde u_\lambda\neq 0$.
This completes the proof of Theorem~\ref{teorema2}, Case~$(a)$.

\smallskip
\textbf{Case $(b)\quad$}
The proof is based on the coercivity of the functional
$\mathcal J_\lambda$ on $\mathcal W$.

\begin{lemma}\label{le:coerb}
Let $\lambda>0$. Under the assumptions of Theorem~\ref{teorema2}, Case~$(b)$,
the functional $\mathcal J_\lambda$
is coercive on $\mathcal W$.
\end{lemma}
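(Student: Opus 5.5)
We prove coercivity by bounding $\mathcal J_\lambda$ from below by a combination of powers of the equivalent norm $\|\cdot\|_{l\rho\gamma}$ from \eqref{eq:defnorma}. Here $l\in\{1,2\}$ is the index given by $(h_{\rho\gamma})$. Case (b) does not assume $(H_{G2})$, so we must first obtain a lower bound for $K_{p_1p_2}$ from what is available. Our first choice is to assume that the standing hypotheses of Theorem~\ref{teorema1} remain in force, so that \eqref{eq:mgK} applies:
\[
K_{p_1p_2}(\widetilde u)\ge C_1\sum_{i=1}^2\|\nabla u_i\|_{ip_i}^{p_i}-C_2 .
\]
If this is not available, we would extract a $p_i$-lower bound from strict convexity and $(H_{G1})$. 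We expect this to be the main obstacle: without $(H_{G2})$, strict convexity alone only gives linear growth of $G_i(x,\cdot)$ at infinity. So we will rely on $(H_{G2})$ as it is used in \eqref{eq:mgK}.

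Next, since $\zeta_i=\eta_i=p_i$ and $(h_{\rho\gamma})$ makes every term of $k_{p_1p_2}$ nonnegative, we keep only the $l$-th terms:
\[
k_{p_1p_2}(\widetilde u)\ge \frac{1}{p_l}\, s_{l\rho\gamma}(u_l)^{p_l}.
\]
Combining the two bounds gives
\[
K_{p_1p_2}(\widetilde u)+k_{p_1p_2}(\widetilde u)\ge C_3\,T(\widetilde u)-C_2,
\]
with $T$ as in \eqref{eq:defTM}. For $\|\widetilde u\|_{l\rho\gamma}\ge 3$, the inequalities \eqref{eq:0estimareMn}--\eqref{eq:0estimareTn} (which hold for any $\widetilde u$ with $M(\widetilde u)\ge1$) yield
\[
T(\widetilde u)\ge M(\widetilde u)^{p_{\min}}\ge 3^{-p_{\min}}\|\widetilde u\|_{l\rho\gamma}^{p_{\min}}.
\]

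For the $\lambda$-term, we use the compact embeddings of Remark~\ref{re:esobW} together with $(H_{pq})$ (trace exponents for $\beta_i\not\equiv0$). By Lemma~\ref{le:normatransmission}, applied with $\rho_l,\gamma_l$ in place of $\alpha,\beta$, the norms $\|\cdot\|$ and $\|\cdot\|_{l\rho\gamma}$ are equivalent. This gives the estimate \eqref{eq:mp4}:
\[
\mathcal H_{q_1q_2}(\widetilde u)\le C_8\sum_{i\in\mathcal I_{\alpha\beta}}\|\widetilde u\|_{l\rho\gamma}^{q_i}\le 2C_8\|\widetilde u\|_{l\rho\gamma}^{q_{\max}}\quad\text{for }\|\widetilde u\|_{l\rho\gamma}\ge1 .
\]
When $q_i=p_i^*$ is allowed and $\beta_i\equiv0$, the embedding is only continuous, but continuity is all this step needs.

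Putting the pieces together, for $\|\widetilde u\|_{l\rho\gamma}\ge 3$ we obtain
\[
\mathcal J_\lambda(\widetilde u)\ge C_4\|\widetilde u\|_{l\rho\gamma}^{p_{\min}}-2\lambda C_8\|\widetilde u\|_{l\rho\gamma}^{q_{\max}}-C_2 .
\]
Since $q_{\max}<p_{\min}$, the right-hand side tends to $\infty$ as $\|\widetilde u\|_{l\rho\gamma}\to\infty$. By the norm equivalence, $\mathcal J_\lambda(\widetilde u)\to\infty$ as $\|\widetilde u\|\to\infty$, which is the claimed coercivity. The only delicate point besides the $(H_{G2})$ issue is the reduction from the mixed-power quantity $T$ to a single power of the norm, and the $M(\widetilde u)$ device handles this exactly as in Case (a).
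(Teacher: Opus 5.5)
Your proof is correct and follows the paper's argument essentially step by step. It uses the same equivalent norm $\|\cdot\|_{l\rho\gamma}$, the same reduction $T(\widetilde u)\ge M(\widetilde u)^{p_{\min}}\ge 3^{-p_{\min}}\|\widetilde u\|_{l\rho\gamma}^{p_{\min}}$ for $\|\widetilde u\|_{l\rho\gamma}\ge 3$, the same bounds \eqref{eq:mgK} and \eqref{eq:mp4}, and closes with $q_{\max}<p_{\min}$. Your remark that this needs $(H_{G2})$, which is absent from the Case (b) hypotheses, is accurate: the paper's proof also uses it tacitly through \eqref{eq:mgK}, and it cannot be dropped, since linear growth of $G_i$ would destroy coercivity in $W^{1,p_i}$. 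Keeping the constants $C_3$ and $-C_2$ also makes your final inequality more precise than the paper's display $\mathcal J_\lambda(\widetilde u)\ge T(\widetilde u)-2\lambda C_8\|\widetilde u\|_{l\rho\gamma}^{q_{\max}}$.
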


\begin{proof}
As in Case~$(a)$, we work with the norm $\|\cdot\|_{l\rho\gamma}$ on $\mathcal W$
defined in \eqref{eq:defnorma}, where $l\in\{1,2\}$ is fixed
according to assumption $(h_{\rho\gamma})$.

Assume that $\|\widetilde u\|_{l\rho\gamma}\to\infty$.
This implies that $M(\widetilde u)\to\infty$ and,
for $\|\widetilde u\|_{l\rho\gamma}\ge 3$, we have
\[
T(\widetilde u)\ge M(\widetilde u)^{p_{\min}}
\ge \frac{1}{3^{p_{\min}}}\|\widetilde u\|_{l\rho\gamma}^{p_{\min}},
\]
where $M$ and $T$ are defined in \eqref{eq:defTM}, Case~(a).
Combined with \eqref{eq:mgK} and \eqref{eq:mp4}, this yields
\begin{equation*}
\begin{split}
\mathcal J_\lambda(\widetilde u)
&=K_{p_1p_2}(\widetilde u)+k_{p_1p_2}(\widetilde u)-\lambda \mathcal{H}_{q_1q_2}(\widetilde u)
\ge T(\widetilde u)-2\lambda C_8\|\widetilde u\|_{l\rho\gamma}^{q_{\max}}\\
&\ge
\frac{1}{3^{p_{\min}}}\|\widetilde u\|_{l\rho\gamma}^{p_{\min}}
-2\lambda C_8\|\widetilde u\|_{l\rho\gamma}^{q_{\max}}\quad \text{for }\|\widetilde u\|_{l\rho\gamma}\ge 3.
\end{split}
\end{equation*}
Since $p_{\min}>q_{\max}$, it follows that $\mathcal J_\lambda(\widetilde u)\to \infty$ as $\|\widetilde u\|_{l\rho\gamma}\to\infty.$
Therefore, $\mathcal J_\lambda$ is coercive on $\mathcal W$. 
\end{proof}
Next, we claim that for every $\lambda>0$, problem~\eqref{eq:1.0}--\eqref{eq:1.00}
admits a nontrivial weak solution.

Fix an arbitrary $\lambda>0.$
Since $\mathcal{J}_\lambda$ is coercive and weakly lower semicontinuous on $\mathcal W$
(see Lemma~\ref{le:coerb}),
the direct method of the calculus of variations yields the existence of a global minimizer
$\widetilde u_*\in\mathcal W$ such that
$
\mathcal J_\lambda(\widetilde u_*)
=\min\bigl\{\mathcal J_\lambda(\widetilde u);\ \widetilde u\in\mathcal W\bigr\}.
$

To verify that $\widetilde u_*\not\equiv 0$, it suffices to prove that there exists
$\widetilde u\in\mathcal W$ such that $\mathcal J_\lambda(\widetilde u)<0.$

Take the constant function $(t, t)\in \mathcal W$ with $t>0$.
Then $K_{p_1p_2}\big((t, t)\big)=0$ and, from assumptions $(H_{\alpha\beta})$, $(h_{\rho\gamma})$, we obtain
\[
\mathcal J_\lambda\big((t, t)\big)=k_{p_1p_2}\big((t, t)\big)-\lambda \mathcal{H}_{q_1q_2}\big((t, t)\big)
\le C_{14} t^{p_{\min}}-\lambda C_{15}t^{q_{\max}}<0
\]
for $t>0$ small enough, as $p_{\min}>q_{\max}$ and $\mathcal I_{\alpha\beta}\neq\emptyset$.

This implies $\mathcal J_\lambda(\widetilde u_*)<0$, therefore
$\widetilde u_*\not\equiv 0$.
This completes the proof of Case~$(b)$.

\begin{remark}\label{re:zetaetaactive}
In Theorem~\ref{teorema2} we took $\zeta_i=\eta_i=p_i$ for simplicity.
The arguments of Section~\ref{sec:teorema2} extend to
$\zeta_i\in(1,p_i^*)$, $\eta_i\in(1,p_{i*})$ when the comparison with the
left-hand side exponents is made only for the active potential terms.
More precisely, set
\[
\mathcal I_{\rho}:=\Big\{ m\in\{1,2\};~ \int_{\Omega_m}\rho_m\,dx>0 \Big\},
\quad
\mathcal I_{\gamma}:=\Big\{ m\in\{1,2\};~ \int_{\Gamma_m}\gamma_m\,dx>0\Big\}.
\]
Under the assumption $(h_{\rho\gamma}),$ define
\[
(\zeta, \eta)_{\min}
:=
\min\{\zeta_j, \eta_k;\, j\in\mathcal I_{\rho},\, k\in\mathcal I_{\gamma}\},
\quad
(\zeta, \eta)_{\max}
:=
\max\{\zeta_j, \eta_k;\, j\in\mathcal I_{\rho},\, k\in\mathcal I_{\gamma}\}.
\]
With these conventions, the conclusions of Theorem~\ref{teorema2}
remain valid if $\zeta_i=\eta_i=p_i$ is replaced by
 $(\zeta,\eta)_{\max}<q_{\min}$ in Case~$(a)$ or $(\zeta,\eta)_{\min}>q_{\max}$ in Case~$(b)$,
while keeping all other assumptions unchanged.
\end{remark}
Corollary~\ref{cor:singledomain}  is an immediate consequence of
Theorems~\ref{teorema1} and~\ref{teorema2} together with
Remark~\ref{re:zetaetaactive}.





\end{document}